\numberwithin{figure}{section}
\def \rpsi_i {|\psi_i \rangle}
\def \lpsi_i {\langle \psi_i|}
\def \lrpsi_i{\langle \psi_i | \psi_i \rangle}
\def \rpsi_k {|\psi_k \rangle}
\def \lpsi_k {\langle \psi_k|}
\def \lrpsi_k{\langle \psi_k | \psi_k \rangle}
\newcommand{\opnorm}[1]{{ \vert\kern-0.25ex \vert\kern-0.25ex \vert #1 
     \vert\kern-0.25ex \vert\kern-0.25ex \vert}}
\newcommand{\bbr}{\mathbb R}
\newcommand{\kp}{\kappa}
\newcommand{\veps}{\varepsilon}
\newcommand{\ba}{\begin{aligned}}
\newcommand{\ea}{\end{aligned}}
\newcommand{\be}{\begin{equation}}
\newcommand{\ee}{\end{equation}}
\newcommand{\dd}{  {\textup{d}} }
\newcommand{\dt} {    {  \textup{d}t   }    }
\newcommand{\tr}{{\textup{tr}}}
\newcommand{\St}{  {\textup{St}}(p,n) }
\newcommand{\init}{  {\textup{in}} }
\newtheorem{theorem}{Theorem}[section]
\newtheorem{lemma}{Lemma}[section]
\newtheorem{remark}{Remark}[section]
\newtheorem{definition}{Definition}[section]
\begin{document}

\title[ ]{Asymptotic stability of the high-dimensional Kuramoto model on Stiefel manifolds}

\author[D. Kim]{Dohyun Kim}
\address[D. Kim]{\newline Department of Mathematics Education, \newline Sungkyunkwan University, Seoul 03063, Republic of Korea}
\email{dohyunkim@skku.edu}

\author[W. Shim]{Woojoo Shim}
\address[W. Shim]{\newline Department of Mathematics Education, \newline Kyungpook National University, Republic of Korea}
\email{wjshim@knu.ac.kr}

\thanks{\textbf{Acknowledgment.} The work of D. Kim was supported by the National Research Foundation of Korea (NRF) grant funded by the Korean government (MSIT) (RS-2024-00454452). The work of W. Shim was supported by Kyungpook National University Research Fund, 2023.}

%
%



\begin{abstract}
%

The aim of this article is to investigate the convergence properties of a heterogeneous consensus model on Stiefel manifolds. We consider each agent, without interaction, moving according to the flow determined by the fundamental vector field of the right multiplication action of the orthogonal group on the Stiefel manifold. We analyze the asymptotic behavior of $N$ such agents, assuming that, as a result of their interactions, each agent's velocity is the sum of its natural velocity and an additional velocity directed towards the average position of the $N$ agents. If the fundamental vector fields of all agents are the same, their movement can be represented as a gradient flow on a product manifold. In this study, we specifically investigate the asymptotic behavior in a non-gradient flow setting, where the fundamental vector fields are not all the same. Since fewer tools are available to address non-gradient flows, we perform an orbital stability analysis to obtain the desired results instead of relying on a gradient flow structure. Our estimate improves upon the previous result in [Ha et al., Automatica {\bf 136} (2022)]. Furthermore, as a direct consequence of the asymptotic dynamics, we derive uniform-in-time stability with respect to the initial data.

%
 
\end{abstract}

\keywords{Consensus, Emergence, Kuramoto model, Stability, Stiefel manifold, Synchronization}

\makeatletter
\@namedef{subjclassname@2020}{%
  \textup{2020} Mathematics Subject Classification}
\makeatother

\subjclass[2020]{34D06, 34C15, 35B35}  

\date{\today}

\maketitle


\section{Introduction}
\setcounter{equation}{0}
 %

Optimization problems on the Stiefel manifold \cite{C-M,J-D,Wang-Y,Zhu} have been extensively studied due to the reducibility of computational cost and their powerful applications. For instance, they are used in statistics \cite{C-V}, linear eigenvalue problems \cite{G-V,W-Y-L-Z}, finding the nearest low-rank correlation matrix \cite{L-Q}, singular value decomposition \cite{L-W-Z,S-I}, and applications to computer vision \cite{Lui,T-V-S-C}. See also \cite{E-A-S,S-S,W-Y} for optimization problems with orthogonality constraints. The Stiefel manifold $\St$ \cite{Stiefel} is defined as $\St := \{X \in \mathcal{M}_{n,p}(\mathbb{R}): X^\top X = I_p\}$, where $\mathcal{M}_{n,p}(\mathbb{R})$ is the set of all $n \times p$ matrices with real entries, $I_p$ is the $p \times p$ identity matrix, and $\top$ denotes the transpose of a matrix. We also denote $\|X\| := \sqrt{\text{tr}(X^\top X)}$  the (Frobenius) norm of a matrix $X \in \mathcal{M}_{n,p}(\mathbb{R})$.

\subsection{Model description}
We consider a consensus model on $\St$, described by a system of differential equations for the state ensemble $\mathcal{S} := (S_1, \ldots, S_N)$, where $S_1, \ldots, S_N$ represent $N$ elements of the Stiefel manifold:
\begin{equation} \label{main}
\begin{aligned}
&\dot S_i = S_i \Xi_i + \kappa \left( S_{ic} - \frac12( S_iS_i^\top S_{ic} + S_i S_{ic}^\top S_i) \right),\\
&S_{ic} :=\frac{1}{N}\sum_{k=1}^Na_{ik}S_k,\quad i\in [N]:=\{1,\ldots,N\},\\
&S_{i}(0)=S_i^\init \in \St,\quad i\in [N].
\end{aligned}
\end{equation} Here, $\kappa \in [0, \infty)$ is a non-negative constant measuring a uniform coupling strength between agents, and $(a_{ik})$ is a symmetric and connected network topology, with $a_{ik} = a_{ki}\geq 0$ for all $i,k\in [N]$. Moreover, $\Xi_i$ is a $p \times p$ skew-symmetric matrix that represents the natural frequency of agent $i$. We say that \eqref{main} is \textit{homogeneous} when $\Xi_i \equiv \Xi$, and \eqref{main} is \textit{heterogeneous} when $\Xi_i \neq \Xi_j$ for some $(i, j)$. In particular, the homogeneous ensemble with $\Xi\equiv O$ can be represented as a gradient flow with a total squared distance functional as its potential:
\begin{equation} \label{main-2}
\mathcal V(\mathcal S): = \frac{1}{N} \sum_{i,k=1}^N a_{ik} \|S_i- S_k\|^2.
\end{equation}
Since $\St$ is a compact manifold, it follows from standard literature, for instance, \L ojasiewicz inequality \cite{Loja}  that a solution to  \eqref{main} with $\Xi \equiv O$  converges to equilibrium  regardless of initial data. 

{ One notable feature of \eqref{main} is that when all $\Xi_1,\ldots,\Xi_N$ commutes with another skew-symmetric matrix $\Xi$, the dynamics of $\{\tilde{S}_i:=S_i\exp(-t\Xi)\}_{i=1}^N$ can be also represented as the same model with natural frequencies $\{\Xi_i-\Xi\}_{i=1}^N$:
	\begin{equation}\label{main-3}
	\begin{aligned}
	&\dot{\tilde{S}}_i=\tilde{S}_i \tilde{\Xi}_i+\kappa \left( \tilde{S}_{ic} - \frac12( \tilde{S}_i\tilde{S}_i^\top \tilde{S}_{ic} + \tilde{S}_i \tilde{S}_{ic}^\top \tilde{S}_i) \right),\\
	&\tilde{S}_i:=S_i\exp(-t\Xi),\quad \tilde{S}_{ic} :=\frac{1}{N}\sum_{k=1}^Na_{ik}\tilde{S}_k,\\
	&\tilde{\Xi}_i=\Xi_i-\Xi.
	\end{aligned}
	\end{equation}
	Therefore,  every homogeneous ensemble $\Xi_i\equiv \Xi$ can be viewed as a dynamics of $\Xi_i\equiv O$ observed in an appropriate moving frame, which we know its convergence as $t\to\infty$.} Since we can rewrite  \eqref{main} as 
\[
\dot S_i = u_i -\kp \nabla_{S_i} \mathcal V(\mathcal S),\quad u_i := S_i\Xi_i,
\]
where $u_i$ is the state-dependent control input for the $i$-th agent, \eqref{main} can be understood as a perturbed system of the gradient flow  in a moving frame by the external control. Thus, our natural goal is to verify whether convergence properties are robust to small perturbations when the effect of $u_i$ is smaller than the effect of $\kappa$.

\subsection{Main results}
The main results of this paper address the asymptotic stability of \eqref{main}. First, when the effect of the natural frequency matrices $\{\Xi_i\}_{i=1}^N$ is sufficiently small compared to the coupling strength $\kappa$, we observe that the composite matrix $S_i^\top S_j$, which can be interpreted as a matrix-valued inner product in the sense of a Hilbert module, converges to definite constant matrices for each $i, j \in [N]$. Before presenting the first main result, we define the concept of emergent behavior for \eqref{main}.

\begin{definition}
	For a solution $\mathcal S = \{S_1,\ldots, S_N \}$ to system \eqref{main}, we say that  system \eqref{main} exhibits asymptotic consensus if for each $i,j \in [N]$,
	\[
	\lim_{t\to\infty}  (S_i^\top S_j)(t) \quad \textup{exists}.
	\]
	In particular, if all $S_i^\top S_j$ converges to $I_p$, then we say that the system exhibits asymptotic complete consensus. 
\end{definition}


Heuristically, as expected and mentioned before, if the heterogeneity is relatively small compared to the coupling strength, and the initial positions are sufficiently close to each other, then the matrix-valued inner products converge to stationary states. In other words, asymptotic consensus arises. These assumptions are formalized in the framework $(\mathcal{F})$ below in Section \ref{sec:2.4}. Roughly speaking, when the coupling strength is large, the effect of the natural frequency matrices becomes negligible, the separable network topology approximates an all-to-all network, and the initial diameter is small. 

\begin{theorem} \label{T1.1}
Suppose that initial data and system parameters satisfy framework $(\mathcal F)$, and let $\mathcal S$ be a solution to \eqref{main}. Then, asymptotic consensus occurs. 
\end{theorem}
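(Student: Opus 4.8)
The plan is to recast the problem as an autonomous finite-dimensional flow and then run a two-step ``confinement plus contraction'' scheme. First I would observe that the matrix-valued inner products $G_{ij} := S_i^\top S_j$ form a \emph{closed} system: differentiating $G_{ij}$ and using that each $\Xi_i$ is skew-symmetric together with $S_k^\top S_k = I_p$, every term regroups into products of the $G_{\cdot\cdot}$ alone, since the ambient $n\times n$ projectors $S_k S_k^\top$ always appear paired as $(S_a^\top S_k)(S_k^\top S_b)$. Hence $\{G_{ij}\}$ obeys an autonomous ODE with constant coefficients $\{\Xi_i\}$ and $\{a_{ik}\}$ on the compact set of admissible Gram tuples, and asymptotic consensus is exactly convergence of this flow. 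This reduction is what makes an orbital (gauge-invariant) analysis natural: the full ensemble $\mathcal S(t)$ never converges because of the rotations $S_i\Xi_i$, but its orbit, encoded by $\{G_{ij}\}$, may.

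The first substantive step is confinement. Using $\|G_{ij}-I_p\| = \|S_i^\top(S_j-S_i)\| \le \|S_i-S_j\|$, it suffices to control the diameter $\mathcal D(t) := \max_{i,j}\|S_i-S_j\|$. For the extremal pair I would compute $\tfrac12\tfrac{\dd}{\dt}\|S_i-S_j\|^2 = \la S_i-S_j,\dot S_i-\dot S_j\ra$. The heterogeneous contribution splits as $\la S_i-S_j,(S_i-S_j)\Xi_i\ra + \la S_i-S_j,S_j(\Xi_i-\Xi_j)\ra$; the first trace vanishes because $(S_i-S_j)^\top(S_i-S_j)$ is symmetric while $\Xi_i$ is skew, so only the \emph{difference} $\Xi_i-\Xi_j$ survives, bounded by $D_\Xi := \max_{i,j}\|\Xi_i-\Xi_j\|$. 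The coupling term is dissipative in the small-diameter, near-all-to-all regime, yielding $\tfrac{\dd}{\dt}\mathcal D \le -\kappa\,c\,\mathcal D + D_\Xi$ with $c>0$. Under framework $(\mathcal F)$ (small $\mathcal D(0)$ and $\kappa$ large relative to $D_\Xi$) a small ball is forward invariant, so $\mathcal D(t)$, and hence every $\|G_{ij}(t)-I_p\|$, stays uniformly small for all $t\ge 0$.

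The second step is a contraction estimate. Let $G,\tilde G$ be two solutions of the Gram system confined as above and set $\delta_{ij} := G_{ij}-\tilde G_{ij}$. The key point is that the rotation part of $\tfrac{\dd}{\dt}\delta_{ij}$, namely $-\Xi_i\delta_{ij}+\delta_{ij}\Xi_j$, is \emph{invisible} to the energy $\|\delta_{ij}\|^2$: indeed $\la\delta_{ij},\Xi_i\delta_{ij}\ra = \tr(\delta_{ij}^\top\Xi_i\delta_{ij}) = 0$ ($\Xi_i$ skew) and $\la\delta_{ij},\delta_{ij}\Xi_j\ra = \tr(\delta_{ij}^\top\delta_{ij}\,\Xi_j) = 0$ (symmetric times skew). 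Thus heterogeneity does \emph{not} enter the contraction rate; it only fixes where the equilibrium sits, already handled in the previous step. Linearizing the coupling term in the confined regime then gives $\tfrac{\dd}{\dt}\max_{i,j}\|\delta_{ij}\|^2 \le (-\kappa\lambda + O(\mathcal D))\max_{i,j}\|\delta_{ij}\|^2$, where $\lambda>0$ is the spectral gap of the network-weighted coupling, and for $\kappa$ large and $\mathcal D$ small this is a genuine exponential contraction.

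Finally, since the Gram system is autonomous, $t\mapsto G_{ij}(t+h)$ is itself a solution with datum $G_{ij}(h)$, which lies in the confinement region for every $h\ge 0$. Applying the contraction to the pair $\big(G_{ij}(\cdot),G_{ij}(\cdot+h)\big)$ yields $\|G_{ij}(t+h)-G_{ij}(t)\| \le C e^{-\kappa\lambda t/2}$ uniformly in $h>0$, which is precisely the Cauchy criterion; hence $\lim_{t\to\infty}G_{ij}(t)$ exists and asymptotic consensus follows. The same estimate, applied to two solutions with distinct data, delivers the uniform-in-time stability corollary at once. I expect the main obstacle to be the contraction estimate: establishing a \emph{uniform} coupling spectral gap in the confined nonlinear regime and verifying that the nonlinear projection remainder is truly of order $O(\mathcal D)$ and does not eat the gap. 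The improvement over [Ha et al.] should come precisely from the exact vanishing of the rotation contribution above, which decouples the heterogeneity from the contraction rate and thereby relaxes the required lower bound on $\kappa$.
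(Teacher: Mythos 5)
Your overall architecture --- (i) the Gram variables $G_{ij}=S_i^\top S_j$ form a closed autonomous system, (ii) confine the diameter $\mathcal D(\mathcal S)$ in a forward-invariant small ball, (iii) prove an orbital contraction between two confined solutions, (iv) apply the contraction to time shifts to get the Cauchy criterion --- is exactly the paper's strategy (Lemmas \ref{L3.2}--\ref{L3.3} for confinement, Lemma \ref{L3.1} for the contraction, Lemma \ref{conv} for the time-shift argument). The confinement step is essentially right, except that the true diameter inequality is cubic, $\frac{\dd}{\dt}\mathcal D\le -\frac{\kp\xi_m^2}{2}\mathcal D+\frac{\kp\xi_m^2}{4}\mathcal D^3+2\sqrt p\,\mathcal D(\Xi)$, and one must verify that the target ball lies below the middle root of the cubic; this is precisely what $(\mathcal F3)$--$(\mathcal F4)$ are calibrated for.

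The genuine gap is in your contraction step. Writing $\delta_{ji}=A_{ji}-\tilde A_{ji}$, the coupling part of $\dot\delta_{ji}$ contains, besides the terms you can treat as $O(\mathcal D)$ perturbations of a linear dissipation, the exact expression $\frac{\kp}{2N}\sum_k\big(a_{ik}(\delta_{jk}-\delta_{kj})+a_{jk}(\delta_{ki}-\delta_{ik})\big)$, built from the \emph{antisymmetrized} Gram differences. When paired with $\delta_{ji}^\top$ and summed over $i,j$, this is not small: the paper computes it exactly as $+\frac{\kp N}{2}\sum_i\|(B_{ci}-\tilde B_{ci})-(B_{ci}-\tilde B_{ci})^\top\|^2\ge 0$, an order-$\kp$ \emph{positive} contribution that the dissipation of $\sum_{i,j}\|\delta_{ji}\|^2$ alone cannot absorb, no matter how small $\mathcal D$ is. This is why the paper's Lyapunov functional is the augmented quantity $\mathcal D(\mathcal A)=\opnorm{\mathcal A-\tilde{\mathcal A}}_2^2+\opnorm{(\mathcal A-\mathcal A^\top)-(\tilde{\mathcal A}-\tilde{\mathcal A}^\top)}_2^2$: the evolution of the skew part supplies the matching negative term, and closing the combined estimate is exactly what forces $(\mathcal F1)$: $\xi_M^2<4\xi_m\xi_c$. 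Relatedly, your claim that heterogeneity is invisible to the contraction is only true for the $\tr(\delta^\top\Xi\delta)$-type terms; in the evolution of the skew part one meets $\tr\big(\delta_{ji}^2(\Xi_i-\Xi_j)\big)$ with $\delta_{ji}^2$ not symmetric, so $\mathcal D(\Xi)$ \emph{does} enter the contraction rate --- this is the source of the $\mathcal D(\Xi)$ term in the paper's $\veps$ and of the smallness requirement $(\mathcal F3)$ on $\mathcal D(\Xi)/\kp$. Without the augmented functional and these two conditions, your estimate $\frac{\dd}{\dt}\max_{i,j}\|\delta_{ij}\|^2\le(-\kp\lambda+O(\mathcal D))\max_{i,j}\|\delta_{ij}\|^2$ does not close.
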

If there exist two constant ensembles $(S_1,\ldots,S_N),~(T_1,\ldots,T_N)\in \St^N$ satisfying 
\[S_i^\top S_j=T_i^\top T_j=:A_{ij},\quad i,j\in [N], \]
we are able to know the inner product between any two of column vectors of $S_i$'s and $T_i$'s, respectively. 
Therefore, we can inductively construct a linear isometry on $\mathbb{R}^n$, which maps each $k$-th column vector of $S_i$ to the corresponding $k$-th column vector of $T_i$, for all $1\leq k\leq p$ and $i\in [N]$. More precisely, we can find a constant orthogonal matrix $O\in O(n)$ such that 
\[OS_i=T_i,\quad i\in [N]. \]	
From this observation, one can see that asymptotic consensus implies the convergence of the solution in some appropriate moving frame. Furthermore, this result can truly be regarded as an extension of the findings in \cite{HKK22} to a perturbed system. In \cite{HKK22}, it was shown that for a homogeneous ensemble with $\Xi_i\equiv O$, all $S_i$ converge to a same value  when all initial $S_i$'s are sufficiently close to each others. This implies that, for a general homogeneous ensemble $\Xi_i\equiv \Xi$ with sufficiently close initial $S_i$'s, all $\tilde{S}_i:=S_i\exp(-t\Xi)$ converge to a same value $S^\infty\in \St$ and 
\[\|S_i^\top S_j-I_p \|=\|\exp(t\Xi)^\top\tilde{S}_i^\top\tilde{S}_j\exp(t\Xi)-I_p \|=\|\tilde{S}_i^\top\tilde{S}_j-I_p \|\to \|(S^\infty)^\top S^\infty-I_p\|=0,  \]
which means the asymptotic complete consensus of the ensemble.

The second result is dedicated to uniform-in-time stability for system \eqref{main} whose definition is recalled below. For this, we  define the $\ell_p$-norm for  a set of matrices $\mathcal X: = \{X_1,\ldots,X_M\} \in \St^M$:
\[
\opnorm{\mathcal X}_p:= \left( \sum_{i=1}^M \|X_i\|^p \right)^\frac1p,\quad p\in [1,\infty).
\]
\begin{definition} \label{D1.2} 
For any two solutions $\mathcal S = \{S_1,\ldots,S_N\}$ and $\tilde{ \mathcal S }= \{\tilde S_1,\ldots,\tilde S_N\}$ to system \eqref{main} with their initial data $\mathcal S^\init$ and $\tilde{\mathcal S^\init}$, respectively, the system is said to be uniform-in-time  $\ell_p$-stable with respect to initial data if there exists a nonnegative constant $G$ independent of time $t$ and number of agents $N$ such that
\[
\sup_{0\leq t<\infty} \opnorm{ \mathcal S(t)  - \tilde {\mathcal S}(t)  } \leq G \opnorm{ \mathcal S^\init - \tilde {\mathcal S^\init} } .
\]
\end{definition}

\begin{theorem} \label{T1.2} 
Suppose that system exhibits asymptotic complete consensus a priori, and let $\mathcal S$ and $\tilde{\mathcal S}$ be any two solutions to system \eqref{main}. 
\begin{enumerate}
\item For any  network structure $(a_{ik})$, the system  is uniform-in time $\ell_1$-stable with respect to initial data in the sense of Definition \ref{D1.2}. 
\item If the network topology is separable in the sense that $a_{ik} = \xi_i\xi_k$ with $\xi_i>0$, then for any $p>0$, the system  is uniform-in time $\ell_p$-stable with respect to initial data in the sense of Definition \ref{D1.2}. 
\end{enumerate}
\end{theorem}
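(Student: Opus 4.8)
The plan is to estimate the difference ensemble $e_i := S_i - \tilde S_i$ directly. Writing the coupling in \eqref{main} as $\kappa P_{S_i}(S_{ic})$ with the tangential projection $P_X(Z) := Z - \tfrac12 X(X^\top Z + Z^\top X)$, we have
\[
\dot e_i = e_i\Xi_i + \kappa\big(P_{S_i}(S_{ic}) - P_{\tilde S_i}(\tilde S_{ic})\big),
\]
both solutions sharing the frequencies $\Xi_i$. Three algebraic facts organize the estimate. First, the frequency term never affects $\|e_i\|$: since $\Xi_i$ is skew-symmetric and $e_i^\top e_i$ symmetric, $\langle e_i, e_i\Xi_i\rangle = \mathrm{tr}(e_i^\top e_i\Xi_i) = 0$. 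Second, expanding $S_i^\top S_i = \tilde S_i^\top\tilde S_i = I_p$ gives $S_i^\top e_i + e_i^\top S_i = e_i^\top e_i$, so the normal part $N_{S_i}(e_i) = \tfrac12 S_i e_i^\top e_i$ is $O(\|e_i\|^2)$ and $e_i$ is tangential to leading order. Third, $P_{S_i}(S_i) = 0$, whence $P_{S_i}(\tilde S_i) = -P_{S_i}(e_i)$. Using $S_{ic} = d_i S_i + O(\mathcal{D})$, with $d_i := \tfrac1N\sum_k a_{ik}$ and $\mathcal{D}(t)$ the ensemble diameter, the third identity makes the curvature contributions of $(P_{S_i}-P_{\tilde S_i})(\tilde S_{ic})$ cancel, leaving the graph-Laplacian form
\[
D_i := P_{S_i}(S_{ic}) - P_{\tilde S_i}(\tilde S_{ic}) = P_{S_i}\Big(\tfrac1N\sum_k a_{ik}(e_k - e_i)\Big) + O(\mathcal{D}\,\|e_i\|).
\]
That the remainder carries the diameter, rather than leaving a non-integrable cubic term, is exactly the payoff of the cancellation $P_{S_i}(\tilde S_i) = -P_{S_i}(e_i)$.

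First I would prove the $\ell_1$ bound for an arbitrary symmetric network. Differentiating $\opnorm{\mathcal S - \tilde{\mathcal S}}_1 = \sum_i\|e_i\|$ and pairing $e_i$ with $D_i$, the leading contribution is $\tfrac{\kappa}{N}\sum_{i,k}a_{ik}\big(\langle\hat e_i, e_k\rangle - \|e_i\|\big)$ with $\hat e_i := e_i/\|e_i\|$; Cauchy--Schwarz gives $\langle\hat e_i, e_k\rangle \le \|e_k\|$, and the symmetry $a_{ik}=a_{ki}$ then forces $\tfrac{1}{N}\sum_{i,k}a_{ik}(\|e_k\|-\|e_i\|) = 0$. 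Hence this part is nonpositive and
\[
\frac{d}{dt}\opnorm{\mathcal S - \tilde{\mathcal S}}_1 \le C\,\mathcal{D}(t)\,\opnorm{\mathcal S - \tilde{\mathcal S}}_1,
\]
once the curvature corrections are controlled as discussed below.

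For the $\ell_p$ bound I would exploit separability $a_{ik} = \xi_i\xi_k$, which collapses the coupling onto a single weighted mean: $S_{ic} = \xi_i S_c$ with $S_c := \tfrac1N\sum_k\xi_k S_k$, so $\dot S_i = S_i\Xi_i + \kappa\xi_i P_{S_i}(S_c)$. With $\bar e := S_c - \tilde S_c = \tfrac1N\sum_k\xi_k e_k$ and $\bar\xi := \tfrac1N\sum_k\xi_k$, the same telescoping gives $D_i = \xi_i P_{S_i}(\bar e - \bar\xi e_i) + O(\mathcal{D}\,\|e_i\|)$. For $p\ge 1$, differentiating $\opnorm{\mathcal S-\tilde{\mathcal S}}_p^p = \sum_i\|e_i\|^p$ produces, at leading order, $p\kappa\sum_i\xi_i\|e_i\|^{p-2}\langle e_i,\bar e\rangle - p\kappa\bar\xi\sum_i\xi_i\|e_i\|^p$; bounding the first by $p\kappa\big(\tfrac1N\sum_k\xi_k\|e_k\|\big)\big(\sum_i\xi_i\|e_i\|^{p-1}\big)$ and invoking Chebyshev's sum inequality for the similarly ordered sequences $(\|e_i\|)$ and $(\|e_i\|^{p-1})$ with weights $\xi_i$ shows it is dominated by the dissipative term — the separable product structure being exactly what makes Chebyshev applicable, and where a general network would fail. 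This yields $\tfrac{d}{dt}\opnorm{\mathcal S-\tilde{\mathcal S}}_p^p \le C\,\mathcal{D}(t)\,\opnorm{\mathcal S-\tilde{\mathcal S}}_p^p$. The residual quasi-norm range $0<p<1$, where $\|\cdot\|^p$ is concave and Chebyshev reverses, is instead handled through boundedness: the scalar dynamics $\tfrac{d}{dt}\|e_i\| \le \kappa\xi_i\bar\xi\,(m - \|e_i\|)$ with $m := (\sum_k\xi_k\|e_k\|)/(\sum_k\xi_k)$ is order preserving, so $\max_i\|e_i\|$ is non-increasing and the separable weights keep the full quasi-norm controlled.

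Both inequalities close by Gronwall once $\int_0^\infty\mathcal{D}(t)\,dt < \infty$, and securing this integrability together with the curvature corrections is the main obstacle. Integrability comes from a bootstrap: asymptotic complete consensus forces $\mathcal{D}(t)<\delta$ beyond some finite $t_0$, after which the same projection dissipation applied to the ensemble itself yields $\tfrac{d}{dt}\mathcal{D}^2 \le -c\,\mathcal{D}^2 + \text{h.o.t.}$ and hence exponential decay, while $[0,t_0]$ contributes a bounded amount by compactness of $\St$. The curvature corrections are of two kinds: a common-mode part, governed by the cancellation of the first paragraph so that it carries the integrable factor $\mathcal{D}(t)$; and a fluctuation part coming from the normal component $N_{S_i}(e_i)$, which always appears multiplied by $e_k - e_i$ and is therefore absorbed, via Young's inequality, into the strict dissipation $-c\sum_{i,k}a_{ik}\|e_i-e_k\|^2$ present away from consensus. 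Assembling these gives $\sup_t\opnorm{\mathcal S(t)-\tilde{\mathcal S}(t)} \le \exp\!\big(C\!\int_0^\infty\mathcal{D}\big)\,\opnorm{\mathcal S^\init - \tilde{\mathcal S}^\init}$, so $G := \exp(C\int_0^\infty\mathcal{D})$ works. The genuinely delicate points are that the curvature remainders really carry $\mathcal{D}(t)$ (this rests on $P_{S_i}(\tilde S_i) = -P_{S_i}(e_i)$, not on crude norm bounds), that the quasi-norm range $0<p<1$ needs the order-preservation argument rather than monotonicity, and that $C$ and $\int_0^\infty\mathcal{D}$ — hence $G$ — are $N$-independent, which follows from the $N$-uniform normalization of the weights and consensus rate built into framework $(\mathcal F)$.
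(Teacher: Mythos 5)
Your proposal follows essentially the same route as the paper. The core of both arguments is the per-agent differential inequality
\[
\frac{\dd}{\dt}\|S_i-\tilde S_i\| \le \frac{\kp}{N}\sum_{k=1}^N a_{ik}\|S_k-\tilde S_k\| - \frac{\kp}{N}\sum_{k=1}^N a_{ik}\|S_i-\tilde S_i\| + \frac{\kp\,\mathcal Z(t)}{N}\sum_{k=1}^N a_{ik}\|S_i-\tilde S_i\|,
\]
which is the paper's Lemma \ref{L4.1}; your cancellation $P_{S_i}(\tilde S_i)=-P_{S_i}(e_i)$, together with $S_i^\top e_i+e_i^\top S_i=e_i^\top e_i$, is precisely the mechanism behind the paper's exact identity $\mathcal I_{121}+\mathcal I_{131}+\mathcal I_{221}+\mathcal I_{231}=0$. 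The $\ell_1$ case is then closed by symmetry of $(a_{ik})$ exactly as you do, and the $\ell_p$ case by a weighted power-mean inequality under separability (the paper uses H\"older where you use Chebyshev; for $p\ge 1$ these are interchangeable), followed by Gr\"onwall with the integrable factor $\mathcal Z(t)$. Two soft spots are worth flagging. First, your bootstrap for $\int_0^\infty\mathcal D\,\dd t<\infty$ is not correct as stated in the heterogeneous case: the diameter equation carries the zeroth-order forcing $2\sqrt p\,\mathcal D(\Xi)$ (Lemma \ref{L3.2}), which does not vanish as $\mathcal D\to 0$, so $\frac{\dd}{\dt}\mathcal D^2\le -c\,\mathcal D^2+\text{h.o.t.}$ fails; the paper does not prove exponential decay here but imports it from \cite{HKK22} as part of the a priori complete-consensus hypothesis, and you would need to do the same. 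Second, for $0<p<1$ your order-preservation argument yields at best an $\ell_\infty$-type bound, and converting that to $\ell_p$ costs a factor $N^{1/p}$, destroying the $N$-independence of $G$; note, however, that the paper's own H\"older step also requires $p\ge 1$ (and Definition \ref{D1.2} defines $\opnorm{\cdot}_p$ only for $p\in[1,\infty)$), so this range is not genuinely covered by the paper either.
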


The proofs of the two main results are found in Section \ref{sec:3} and Section \ref{sec:4}, respectively, by using several key lemmas.  


\subsection{Novelty and contribution} 
In literature, \eqref{main} for homogeneous case has been studied. To name a few, the authors in \cite{M1,M2} showed that the consensus manifold  defined $\mathcal C := \{ (S_i)_{i=}^N \in \St^N : S_i = S_j,~i,j\in [N]\}$ is almost globally asymptotically stable when $p \leq \frac{2n}{3} -1$. This result holds for generic initial data; however, there was  restriction on the pair $(p,n)$. On the other hand, the first author of this paper and his collaborators showed in \cite{HKK22} that \eqref{main} for homogeneous case exhibits asymptotic complete consensus for restricted initial data and any pair $(p,n)$. We would say that these two results are complementary.  For detail statements, we refer the reader to Section \ref{sec:2.3}.

Regarding the heterogeneous case, only a partial and weak result was provided in \cite{HKK22}. There, the sufficient initial condition leading to asymptotic consensus for the heterogeneous model depends on the number of agents $N$, and the size of the admissible initial data shrinks to zero as $N$ increases. In the current work, we overcome this restriction by employing a completely different method from \cite{HKK22}, which we call orbital stability analysis. Consequently, our initial configuration is independent of the number of agents $N$, achieved through a carefully conducted sharp analysis.

It is worthwhile to mention that \eqref{main} can be understood as a (small) perturbation of the gradient flow on the Stiefel manifold. In dynamical systems theory, one notable feature of a gradient flow (particularly on bounded manifolds) is that a solution converges to equilibrium. However, once the gradient structure is broken, such convergence is no longer guaranteed. In other words, it is unclear whether a solution converges to a stationary state. Since our system is perturbed by the introduction of heterogeneous skew-symmetric matrices, a natural question arises: if the heterogeneity is small, is the convergence property of a gradient flow preserved? Or does a solution still converge to equilibrium? Our answer to this question is affirmative in the sense that all matrices $S_i^\top S_j$ converge, but the solution $S_i$ itself may not. See also \cite{KP22} for similar problems in consensus models on the unit sphere and the unitary group.

Lastly, uniform-in-time stability was first considered in \cite{HKZ} for the Cucker-Smale flocking model \cite{C-S}. In fact, this stability can be applied to derive the mean-field limit when the number of agents is sufficiently large. Specifically, if $N \gg 1$, it becomes more effective to consider the temporal evolution of a probability density function using the BBGKY hierarchy. Note that the density function is governed by a kinetic-type partial differential equation (PDE). Therefore, we can estimate the distance between the measure-valued solution of the mean-field equation and the solution of the original model using uniform-in-time stability. Moreover, for some equilibrium ${S_i^\infty}$ of \eqref{main}, which is a trivial solution, if we set $\tilde{S}_i = S_i^\infty$, uniform-in-time stability also ensures the stability of the equilibrium.

\subsection{Notation}  
Before closing this section, several notations are introduced for later use.
\subsubsection{State matrix} We write relative correlation matrices  for any two solutions $\mathcal S$ and $\tilde{\mathcal S}$
\[
A_{ji} := S_j^\top S_i,\quad \tilde A_{ji} := \tilde S_j^\top \tilde S_i.
\]
Then, the maximal diameter for $\mathcal S$ is defined by 
\[
\mathcal D(\mathcal S) := \max_{1\leq i,j \leq N} \|S_i -S_j\|,
\]
and $\ell_2$ diameters for $\mathcal A = (A_{ji})$ are given by
\begin{align*}
&\opnorm{\mathcal A- \tilde{\mathcal A}}_2^2:= \sum_{i,j=1}^N \|A_{ji} - \tilde A_{ji}\|^2, \\
&  \opnorm{(\mathcal A - \mathcal A^\top) - (\tilde{\mathcal A} - \tilde{\mathcal A}^\top)   }_2^2 := \sum_{i,j=1}^N \| (A_{ji} -  A_{ji}^\top) - (\tilde A_{ji} - \tilde A_{ji}^\top)\|^2 .
\end{align*}
For simplicity, we also write $\mathcal D(\mathcal A) := \opnorm{\mathcal A- \tilde{\mathcal A}}_2^2 + \opnorm{(\mathcal A - \mathcal A^\top) - (\tilde{\mathcal A} - \tilde{\mathcal A}^\top)   }_2^2 $.
\subsubsection{System parameters}
We measure the heterogeneity for $\{\Xi_i\}$:
\[
\mathcal D(\Xi) := \max_{1\leq i,j\leq N } \|\Xi_i - \Xi_j\|,
\]
and some statistical quantities for $\{\xi_i\}$:
\[
\xi_m := \min_{1\leq i \leq N} \xi_i,\quad \xi_M := \max_{1\leq i \leq N} \xi_i,\quad \mathcal D(\xi) := \xi_M-\xi_m, \quad \xi_c := \frac1N \sum_{k=1}^N \xi_k.
\]

The rest of this paper is organized as follows. In Section \ref{sec:2}, we provide basic known results for the Stiefel manifold and the main model, and review previous results for relevant literature. Furthermore, the framework $(\mathcal F)$ and the strategy for the main results are introduced. Then, the proofs of two main results are provided in Section \ref{sec:3} and Section \ref{sec:4}, respectively. Finally, Section \ref{sec:5} is devoted to a brief summary of the main results. 

\vspace{0.3cm}

\section{Preliminaries} \label{sec:2} 
\setcounter{equation}{0}
In this section,  several preliminaries for main theorem are provided.

\subsection{The Stiefel manifold}
Since the Frobenius norm of each $X \in \St$ is $\sqrt p$, we know that the Stiefel manifold is compact whose dimension is $pn - \frac{p(p+1)}{2}$. Note that the category of Stiefel manifolds includes various well-known manifolds, such as the unit sphere $\textup{St}(1,n) = \mathbb{S}^{n-1}$, the special orthogonal group $\textup{St}(n-1,n) = \textup{SO}(n)$, the orthogonal group $\textup{St}(n,n) = \textup{O}(n)$, and so on.  If we specifically consider the manifold $\mathbb{S}^1=\textup{St}(1,2)$, the consensus model \eqref{main} precisely represents the dynamics of $\{e^{{\rm i}\theta_j} \}_{j=1}^N$'s when the phases $\{\theta_j\}_{j=1}^N$ follow the Kuramoto model.

\subsection{Dynamical Properties}
Once a dynamical system on a specific manifold is studied, the positivity of the governing manifold should be guaranteed. Although this has been shown in previous literature, such as \cite{HKK22}, we provide the proof here for the sake of consistency within the paper.

\begin{lemma} \label{L2.1}
Let $\mathcal S = \{S_1,\ldots,S_N\}$ be a solution to system \eqref{main} with initial data $\mathcal S^\init =\{S_1^\init, \ldots, S_N^\init\}$. Then, the states stay on the Stiefel manifold for all time, provided they are initially on the Stiefel manifold. In other words, if $S_i^\init \in \St$ for $i\in [N]$, then $S_i(t) \in \St$ for $i\in [N]$ and $t>0$.
\end{lemma}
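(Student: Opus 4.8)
The plan is to track the defect from the Stiefel constraint, $Q_i(t) := S_i^\top S_i - I_p$, and to show that it obeys a linear homogeneous matrix ODE along the flow, so that $Q_i(0)=O$ forces $Q_i \equiv O$ on the whole interval of existence. First I would differentiate $S_i^\top S_i$ along \eqref{main}. Since $Q_i$ is automatically symmetric and $\dot S_i^\top S_i = (S_i^\top \dot S_i)^\top$, this reduces the computation to understanding the single quantity $S_i^\top \dot S_i$, because $\dot Q_i = (S_i^\top \dot S_i) + (S_i^\top \dot S_i)^\top$.

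Second, I would compute $S_i^\top \dot S_i$ \emph{without} assuming the constraint, substituting $S_i^\top S_i = I_p + Q_i$ wherever it appears. Setting $\Omega_i := \Xi_i + \frac{\kappa}{2}(S_i^\top S_{ic} - S_{ic}^\top S_i)$ and $B_i := S_i^\top S_{ic} + S_{ic}^\top S_i$, a direct expansion gives $S_i^\top \dot S_i = \Omega_i + Q_i \Xi_i - \frac{\kappa}{2} Q_i B_i$. The crucial structural point is that $\Omega_i$ is skew-symmetric, because $\Xi_i$ is skew and $S_i^\top S_{ic} - S_{ic}^\top S_i = S_i^\top S_{ic} - (S_i^\top S_{ic})^\top$ is skew, whereas $B_i$ is symmetric.

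Third, adding the transpose and using $Q_i^\top = Q_i$, $\Omega_i^\top = -\Omega_i$, $\Xi_i^\top = -\Xi_i$, $B_i^\top = B_i$, the inhomogeneous skew part $\Omega_i$ cancels against $\Omega_i^\top$ and I obtain the closed form $\dot Q_i = (Q_i \Xi_i - \Xi_i Q_i) - \frac{\kappa}{2}(Q_i B_i + B_i Q_i)$, which is linear and homogeneous in $Q_i$ with coefficients $\Xi_i$ (constant) and $B_i(t)$ (a fixed continuous function of time once the solution $\mathcal S$ is given, so the system decouples over $i$). From here I would finish in either of two equivalent ways: invoke uniqueness for the linear ODE with zero initial datum, or test against $Q_i$. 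For the latter, the commutator term drops by cyclicity of the trace, leaving $\frac{d}{dt}\|Q_i\|^2 = -2\kappa\,\tr(Q_i^2 B_i) \le 2\kappa \|B_i\|\,\|Q_i\|^2$; since $B_i$ is bounded on any compact subinterval of the maximal existence interval, Gr\"onwall's inequality with $\|Q_i(0)\|=0$ yields $Q_i \equiv O$, i.e. $S_i(t) \in \St$ for all $t$ and all $i \in [N]$.

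The main obstacle here is bookkeeping rather than conceptual: one must expand $S_i^\top \dot S_i$ carefully and sort the resulting terms into a single skew-symmetric piece together with pieces proportional to $Q_i$, so that the order-zero-in-$Q_i$ contribution—precisely $\Omega_i$—is exactly the part annihilated by symmetrization. Once this symmetry accounting is carried out, invariance is immediate; as a byproduct the same computation shows that along a genuine solution $S_i^\top \dot S_i = \Xi_i + \frac{\kappa}{2}(S_i^\top S_{ic} - S_{ic}^\top S_i)$ is skew-symmetric, which is the infinitesimal statement that the vector field in \eqref{main} is tangent to $\St$.
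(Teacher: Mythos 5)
Your argument is correct and is essentially the paper's own proof: with $Q_i = S_i^\top S_i - I_p = -(I_p - S_i^\top S_i)$ you derive exactly the same linear homogeneous evolution $\dot Q_i = (Q_i\Xi_i - \Xi_i Q_i) - \tfrac{\kappa}{2}(Q_iB_i + B_iQ_i)$ that the paper writes for $H_i := I_p - S_i^\top S_i$, and the same trace identity $\tfrac{\dd}{\dt}\|Q_i\|^2 = -2\kappa\,\tr(Q_i^2B_i) \le 2\kappa\|B_i\|\|Q_i\|^2$. Your Gr\"onwall (or linear-ODE uniqueness) conclusion from $Q_i(0)=O$ is a cleaner packaging of the paper's continuity-set argument, but the substance is identical.
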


\begin{proof} 
For simplicity, we write $H_i := I_p - S_i^\top S_i$. Then, $H_i$ satisfies
\[
\begin{aligned}
\frac\dd\dt H_i
&=H_i \Xi_i-\Xi_i H_i-\frac{\kappa}{2}H_i(S_i^\top S_{ic}+S_{ic}^\top S_i)-\frac{\kappa}{2}(S_i^\top S_{ic}+S_{ic}^\top S_i)H_i,\\
\frac \dd\dt \|H_i\|^2&=\mbox{tr}\left(\dot{H_i}H_i+H_i\dot{H_i}\right)=-2\kappa\mbox{tr}(H_i^2(S_i^\top S_{ic}+S_{ic}^\top S_i))\leq 2\kappa \|H_i\|^2\|S_i^\top S_{ic}+S_{ic}^\top S_i\|.
\end{aligned}
\]
Define a temporal set 
\[
\mathcal T := \{ T>0: \max_{1\leq i \leq N } \|H_i(t)\| = 0,\quad t\in [0,T) \}.
\]
Since all $S_i^\init$ are contained in $\St$, we know that $\mathcal T$ is nonempty due to the continuity of a solution. Thus, we can define $T_*:= \sup \mathcal T>0$. Suppose to the contrary that $T_*<\infty$. By the definition we have
\[
\max_{1\leq i \leq N } \|H_i (T_*)\|>0.
\]
On the other hand, since $S_i(t) \in \St$ for $t\in [0,T_*)$, we have
\[
\frac\dd\dt \|H_i\| \leq C \|H_i\|,\quad t\in [0,T_*)
\]
for some constant $C=C(T_*)>0$. Then, it follows from Gr\"onwall's inequality that 
\[
\|H_i(t) \| \leq \|H_i(0)\|e^{Ct}, \quad t\in [0,T_*).
\]
In particular, $\max_{1\leq i \leq N} \|H_i(T_*)\| =0$ which contradicts. Hence, we have $T_* = \infty$. 
\end{proof}

Next, we recall how the orbital stability can be applied to the convergence of a time dependent function. 

\begin{lemma} \label{conv} \cite{H-R16,K24}
	Let $Z\in \bbr^m$ be a uniformly bounded solution to the  autonomous differential equation
	\begin{equation}\label{A-50}
	\dot Z = F(Z),\quad t>0,\quad Z(0) =Z_0,
	\end{equation} 
	where $F$ is a continuously differentiable vector field. If there exists a constant  $C>0$  such that 
	\begin{equation*} \label{A-51}
	\|Z(t) - \tilde Z(t) \| \leq e^{-Ct} ,\quad t>0,
	\end{equation*} 
	for each $\tilde{Z}$ satisfying $\dot{\tilde{Z}}=F(\tilde{Z})$, then $Z(t)$ converges to a definite value: there exists a constant vector $Z_\infty \in \bbr^m$ such that
	\begin{equation} \label{A-52}
	\lim_{t\to\infty} Z(t)= Z_\infty.
	\end{equation} 
\end{lemma}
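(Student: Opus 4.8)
The plan is to exploit the fact that \eqref{A-50} is autonomous, so that its flow is invariant under time translation. Concretely, for any fixed shift $s>0$ the map $t\mapsto Z(t+s)$ is again a solution of $\dot W=F(W)$, now with initial datum $Z(s)$ in place of $Z_0$. The $C^1$ regularity of $F$ guarantees local existence and uniqueness, while the assumed uniform boundedness of $Z$ (together with global existence of the trajectory) lets me use the shifted solution on all of $[0,\infty)$. This observation is what converts the hypothesis, which a priori compares $Z$ against arbitrary comparison solutions, into a self-comparison of $Z$ with its own time-translates.

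First I would apply the standing assumption to the particular admissible choice $\tilde Z(t):=Z(t+s)$. Since $\tilde Z$ solves the same equation, the assumed contraction estimate yields
\[
\|Z(t)-Z(t+s)\|\le e^{-Ct},\qquad t>0,
\]
and crucially this bound is uniform in the shift parameter $s>0$.

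Next I would reinterpret this as a Cauchy estimate in $t$. Given $t_2>t_1>0$, set $s:=t_2-t_1>0$; then the displayed bound with $t=t_1$ gives
\[
\|Z(t_1)-Z(t_2)\|\le e^{-Ct_1}.
\]
Fixing $\varepsilon>0$ and choosing $T$ so large that $e^{-CT}<\varepsilon$, I obtain $\|Z(t_1)-Z(t_2)\|<\varepsilon$ whenever $t_2>t_1>T$. Hence $\{Z(t)\}_{t>0}$ is Cauchy, and by completeness of $\bbr^m$ it converges to some $Z_\infty\in\bbr^m$, which is exactly \eqref{A-52}.

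The argument is short, and the only real subtlety — the step I would be most careful about — is the very first one: recognizing that the hypothesis should be tested against the shifted copy $Z(\cdot+s)$ of the solution itself rather than against an external trajectory. Everything after that reduces to the elementary Cauchy criterion. I would also make explicit why the shifted map is an admissible $\tilde Z$ (autonomy, plus the global existence on $[0,\infty)$ ensured by boundedness and $C^1$ regularity), since this is precisely what licenses substituting it into the hypothesis.
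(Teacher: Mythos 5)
Your argument is correct. The paper does not actually supply a proof of this lemma (it is quoted from \cite{H-R16,K24}), but the approach you take --- testing the contraction hypothesis against the time-translates $\tilde Z(t)=Z(t+s)$, which are admissible comparison solutions by autonomy, and reading the resulting bound $\|Z(t)-Z(t+s)\|\le e^{-Ct}$, uniform in $s>0$, as the Cauchy criterion --- is precisely the standard argument used in those references, and every step, including the justification that the shifted trajectory is a globally defined solution, is in order.
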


%

\subsection{Literature review} \label{sec:2.3} 
For the consensus model (or high-dimensional Kuramoto model) on the Stiefel  manifold, there is not much available literature. Markdahl and his collaborators studied the corresponding homogeneous model in \cite{M1,M2}. Precisely, they performed stability analysis to show that for generic initial data and restricted pairs $(p,n)$, the consensus state is stable in some sense. On the other hand, the first author of this paper and his collaborators studied both homogeneous and heterogeneous models by using diameter analysis. Hence, their results hold for any pair $(p,n)$ but well-prepared class of initial data. 

\begin{theorem}
Let $\mathcal S$ be a global solution to \eqref{main}. 
\begin{enumerate}
\item \cite{M1,M2}
Suppose that the dimension pair $(p,n)$ satisfies $p\leq \frac{2n}{3}-1$, and $\mathcal G = (V,E)$ with a vertex set $V$ and an edge set $E$ is connected. Let $\mathcal S$ be a global solution to \eqref{main} with $\Xi\equiv O$ on the graph $\mathcal G$. Then, the consensus manifold  is almost globally asymptotically stable. 
\item \cite{HKK22} If $\Xi_i \equiv O$ and initial diameter $\mathcal D(\mathcal S^\init) <\sqrt2$, then the system exhibits  asymptotic complete consensus. 
\item \cite{HKK22} If $\Xi_i \neq \Xi_j$ for some $i\neq j$, initial diameter $\mathcal D(\mathcal S^\init) <\mathcal O(N^{-1})$ and $\kp> \mathcal O(N^2)$, then the system exhibits asymptotic consensus.  
\end{enumerate}
\end{theorem}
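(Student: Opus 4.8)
The three parts rest on two genuinely different mechanisms, so I would prove them by separate routes: parts (2) and (3) by a direct \emph{diameter analysis} (a Gr\"onwall argument on the maximal pairwise distance), and part (1) by a \emph{Lyapunov/critical-point analysis} exploiting the gradient structure available only when $\Xi_i \equiv O$.

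For part (1), the starting point is that with $\Xi_i \equiv O$ the system \eqref{main} is the negative gradient flow of $\mathcal V(\mathcal S)$ in \eqref{main-2} on the compact product manifold $\St^N$. As already noted via the \L ojasiewicz inequality, every trajectory converges to a single critical point, so it suffices to understand the critical set. The consensus manifold $\mathcal C$ is precisely the set of global minimizers of $\mathcal V$, and the plan is to classify all \emph{other} critical points and show each is a saddle, i.e.\ its Hessian admits a strictly negative direction. The hypothesis $p \leq \frac{2n}{3}-1$ enters exactly here: it provides enough unused directions in the Stiefel tangent space to construct an explicit destabilizing variation at every non-consensus equilibrium. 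Once each such equilibrium is shown to be linearly unstable, its stable set has positive codimension and is therefore Lebesgue-null; taking the union over the (stratified) non-consensus critical components keeps the exceptional set null, which is exactly almost-global asymptotic stability of $\mathcal C$.

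For parts (2) and (3), I would monitor $D(t) := \mathcal D(\mathcal S(t)) = \max_{i,j}\|S_i(t)-S_j(t)\|$. Since $D$ is a maximum of smooth functions I would work with its upper Dini derivative and, at each instant, the extremal pair $(i,j)$ realizing the maximum. Differentiating $\tfrac12\|S_i-S_j\|^2 = p - \tr(S_i^\top S_j)$ along \eqref{main} and using $\Xi_i^\top = -\Xi_i$ together with $S_i^\top S_i = I_p$, the coupling term $\kp\big(S_{ic} - \tfrac12(S_iS_i^\top S_{ic}+S_iS_{ic}^\top S_i)\big)$ should contribute a strictly contractive term as long as $D < \sqrt2$; this threshold is geometric, being the regime in which the tangential projection of the average still points inward (for $p=1$ it is exactly the open-hemisphere condition $\tr(S_i^\top S_j)>0$ of the classical Kuramoto model). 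In the homogeneous case this yields $\dot D \leq -c\,\kp\,D$ (in the sense of the upper Dini derivative) on the invariant set $\{D<\sqrt2\}$, hence exponential decay to $0$ and complete consensus. In the heterogeneous case the residual term $S_i\Xi_i - S_j\Xi_j$ injects a forcing of order $\mathcal D(\Xi)$ into the same inequality; balancing it against the $\kp D$ contraction closes a small-diameter invariant region only for $\kp$ large and $\mathcal D(\mathcal S^\init)$ small, and the stated $N$-scaling arises because $S_{ic}$ is an average over all $N$ agents whose connectivity weights must be controlled uniformly.

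The main obstacle is the critical-point classification in part (1): establishing that \emph{every} non-consensus equilibrium is a saddle is a global, dimension-sensitive statement, and building the destabilizing tangent direction requires careful use of the Stiefel geometry and the spectral margin encoded by $p \leq \frac{2n}{3}-1$. By comparison, the diameter estimates in (2)--(3) are technically delicate --- the sharp $\sqrt2$ threshold and the $N$-dependence demand careful bookkeeping --- but conceptually routine once the sign of the coupling contribution is pinned down.
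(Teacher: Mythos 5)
This statement is the literature-review theorem of Section \ref{sec:2.3}: the paper itself gives no proof, but simply cites \cite{M1,M2} for part (1) and \cite{HKK22} for parts (2) and (3). So there is no in-paper argument to compare against line by line; what can be said is whether your sketch matches the strategies of the cited works and whether it would actually close.

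For parts (2) and (3) your route is essentially the one used in \cite{HKK22} and reproduced in this paper's own machinery: Lemma \ref{L3.2} here gives exactly the Dini-derivative diameter inequality $\frac{\dd}{\dt}\mathcal D(\mathcal S)\leq -\frac{\kp\xi_m^2}{2}\mathcal D+\frac{\kp\xi_m^2}{4}\mathcal D^3+2\sqrt p\,\mathcal D(\Xi)$, whose homogeneous specialization is negative precisely on $\{0<\mathcal D<\sqrt2\}$, which is where the $\sqrt2$ threshold of part (2) comes from. Your identification of the mechanism is correct. For part (3), however, your explanation of the $N$-scaling (``the average over all $N$ agents whose connectivity weights must be controlled uniformly'') is not where the restriction actually originates: in \cite{HKK22} the heterogeneous case is handled by a coupled system of differential inequalities for the Gram matrices $S_i^\top S_j$ rather than for the position diameter alone, and the $\mathcal O(N^{-1})$ and $\mathcal O(N^2)$ thresholds fall out of closing that system. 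As a blind sketch this is acceptable in spirit but would not reproduce the stated scalings. For part (1), your plan (classify non-consensus critical points of $\mathcal V$, show each admits a destabilizing second variation using $p\leq\frac{2n}{3}-1$, conclude the exceptional set is null) is the strategy of \cite{M2}, but the step ``linearly unstable $\Rightarrow$ stable set is Lebesgue-null'' is the one genuine gap: the non-consensus critical set is generally a union of non-hyperbolic submanifolds, so one needs a center-stable manifold argument together with a stratification of the critical set, not merely pointwise linear instability. You flag the difficulty but do not supply the construction, so as written part (1) is a program rather than a proof.
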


It should be mentioned that the initial framework in \cite{HKK22} for the results of the heterogeneous model crucially depends on $N$. Specifically, for large $N\gg1$, the sufficient initial diameter will shrink to zero, while the coupling strength diverges to infinity. These assumptions are too restrictive in the setting with a large number of agents. However, in this work, we overcome this limitation by providing a detailed analysis. Lastly, we refer the reader to \cite{K23} for the emergence of asymptotic complete consensus for \eqref{main} with homogeneity and control parameters, where the convergence rate is achieved either in finite time or with an algebraic rate.

\subsection{Framework and strategy descriptions} \label{sec:2.4}
We here introduce the framework $(\mathcal F)$ for \eqref{main} leading to asymptotic consensus. 

$\bullet$ ($\mathcal F1$ and $\mathcal F2$: Network topology)
\begin{equation}\label{F1}
(\mathcal F1):~~\xi_M^2 <4\xi_m \xi_c. \qquad (\mathcal F2):~~ \mathcal D(\xi) < \frac{\xi_m\xi_c}{3 \xi_M}.
\end{equation}
These conditions imply that variance of  $\{\xi_k\}$ is small and hence $\{\xi_k\}$ is close to the identical one. 

$\bullet$ ($\mathcal F3$: Coupling strength and natural frequencies)
\begin{equation} \label{F3}
	\begin{aligned}
		\frac{\mathcal D(\Xi)}{\kappa} <  (\xi_m\xi_c-3\xi_MD(\xi))\cdot \frac{2-\frac{1}{100p}}{\frac{80\xi_M^2p}{\xi_m^2}+2-\frac{1}{100p}}.
	\end{aligned}
\end{equation}
This condition says that $\kappa$ is sufficiently large compared to the variance of $\{\Xi_i\}$.

$\bullet$ ($\mathcal F4$: Initial data)
\begin{equation} \label{F4}
  \mathcal D(\mathcal S^\init)< \frac{\xi_m\xi_c-3\xi_MD(\xi)-\frac{D(\Xi)}{\kappa}}{10 \xi_M^2 \sqrt p}.
\end{equation}
 This conditions says that the initial diameter is small so that all inner products $S_i^\top S_j$ are close to the identity.\\

Since the proof consists of several steps, our strategy is briefly introduced for the readers' convenience.

$\bullet$ (Step A):   we derive the orbital stability estimate which can be successfully applied only when the maximal diameter should be small (see Lemma \ref{L3.1}). 

$\bullet$ (Step B): we indeed show the maximal diameter becomes small as we wish under the framework $(\mathcal F)$ (see Lemma \ref{L3.2} and Lemma \ref{L3.3}).

$\bullet$ (Step C): combining Steps A and B and Lemma \ref{conv}, the desired convergence is obtained.

\vspace{0.5cm}

Lastly, we end this section with elementary inequality. 
\begin{lemma} \label{L2.2}
Let $\veps = \veps(t)$ be a nonnegative integrable function on $(0,\infty)$ and   $y=y(t)$ be a nonnegative $C^1$-function satisfying
\[
\dot y \leq \veps(t) y,\quad t>0.
\]
Then, there exists a (uniform) constant $G>0$ such that
\[
y(t) \leq G y(0),\quad t>0.
\]
\end{lemma}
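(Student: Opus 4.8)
The statement is a standard Grönwall-type estimate, and the plan is to integrate the differential inequality by means of an integrating factor rather than by dividing through by $y$ (which could vanish, since $y$ is only assumed nonnegative). Concretely, I would introduce
\[
E(t) := \exp\left( -\int_0^t \veps(s)\,ds \right),
\]
which is well-defined, strictly positive, and of class $C^1$ because $\veps$ is nonnegative and integrable on $(0,\infty)$.

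Next I would differentiate the product $y(t)E(t)$. Using $\dot E = -\veps E$ together with the hypothesis $\dot y \le \veps y$, one obtains
\[
\frac{d}{dt}\big( y(t)E(t) \big) = \big(\dot y(t) - \veps(t) y(t)\big) E(t) \le 0,
\]
where the sign follows from $E(t) > 0$ and $\dot y - \veps y \le 0$. Hence the map $t \mapsto y(t)E(t)$ is nonincreasing, and in particular $y(t)E(t) \le y(0)E(0) = y(0)$ for every $t > 0$.

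Finally, I would undo the integrating factor. Since $E(t)^{-1} = \exp\left(\int_0^t \veps(s)\,ds\right)$ and $\int_0^t \veps(s)\,ds \le \int_0^\infty \veps(s)\,ds =: M < \infty$ by integrability, it follows that
\[
y(t) \le y(0)\exp\left(\int_0^t \veps(s)\,ds\right) \le y(0)\, e^M,\quad t>0,
\]
so the claim holds with the uniform constant $G := e^M$, which is finite and, crucially, independent of $t$.

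There is no substantial obstacle in this argument; the only point worth flagging is that one should resist forming $\frac{d}{dt}\log y$, as $y$ may vanish. The integrating-factor formulation sidesteps this entirely and uses nothing beyond the integrability of $\veps$, which is precisely what guarantees the finiteness of $M$ and hence the uniformity of $G$.
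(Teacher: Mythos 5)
Your proof is correct and follows essentially the same route as the paper, namely the integrating-factor (Gr\"onwall) argument yielding $y(t)\le e^{\int_0^t \veps}\,y(0)\le e^{\int_0^\infty \veps}\,y(0)=:Gy(0)$. Your version merely spells out the monotonicity of $y(t)e^{-\int_0^t\veps}$ and the caveat about not dividing by $y$, which the paper leaves implicit.
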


\begin{proof}
The proof directly follows from the method of an integrating factor:
\begin{align*}
y(t) \leq e^{\int_0^t \veps(s) \dd s }y(0) \leq e^{\int_0^\infty \veps(t) \dd t }y(0) =: Gy(0).
\end{align*}
\end{proof}

\section{Proof of Theorem \ref{T1.1}} \label{sec:3}
\setcounter{equation}{0}
In this section, we complete the proof of Theorem \ref{T1.1} by providing several lemmas.  First, differential inequality  for the diameter $\mathcal D(\mathcal A)$ is derived to obtain the orbital stability.

\begin{lemma} \label{L3.1}
Let $\mathcal S$ and $\tilde{\mathcal S}$ be any two solutions to \eqref{main}. Then, we have
\[
\frac\dd\dt \mathcal D(\mathcal A) \leq  -4 (\kp\xi_m\xi_c - \veps) \opnorm{\mathcal A- \tilde{\mathcal A}}_2^2 -\kp ( 4\xi_m \xi_c - \xi_M^2) \opnorm{(\mathcal A - \mathcal A^\top) - (\tilde{\mathcal A} - \tilde{\mathcal A}^\top)   }_2^2,
\]
where $\veps=\veps(t)$ is a quantity depending on time $t$ that will be made sufficiently small as we wish:
\[
\veps := 5 \kp \xi_M^2\sqrt p (\mathcal D(\mathcal S) + \mathcal D(\tilde{\mathcal S})) +3 \kp \xi_M \mathcal D(\xi) + \mathcal D(\Xi)
\]
\end{lemma}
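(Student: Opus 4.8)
The plan is to differentiate the relative correlation matrices and track the two quadratic quantities making up $\mathcal D(\mathcal A)$. Using \eqref{main}, the skew-symmetry $\Xi_i^\top = -\Xi_i$, and $C_i := S_i^\top S_{ic} = \tfrac1N\sum_k a_{ik}A_{ik}$, I would first compute
\[
\frac{\dd}{\dt} A_{ji} = -\Xi_j A_{ji} + A_{ji}\Xi_i + \kp\Big[\tfrac1N\sum_k(a_{ik}A_{jk} + a_{jk}A_{ki}) - \tfrac12 A_{ji}(C_i + C_i^\top) - \tfrac12(C_j + C_j^\top)A_{ji}\Big].
\]
Writing $D_{ji} := A_{ji} - \tilde A_{ji}$ and subtracting the identical equation for $\tilde A_{ji}$ (both solutions share the frequencies $\Xi_i$), the heterogeneity contributes $-\Xi_j D_{ji} + D_{ji}\Xi_i$. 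I would then form $\frac{\dd}{\dt}\|D_{ji}\|^2$ and the analogue for $D_{ji}-D_{ji}^\top$, sum over $i,j$, and throughout exploit the structural identities $A_{ij} = A_{ji}^\top$ (so $D_{ij} = D_{ji}^\top$) and $A_{ii}=I_p$ from Lemma~\ref{L2.1}.

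The summed derivative then separates into a dissipative part, a ``gain'' part, and errors. The self-interaction blocks, with $C_i$ replaced by its consensus value $\xi_i\xi_c I_p$, act as pure damping $-\kp(\xi_i+\xi_j)\xi_c\|D_{ji}\|^2 \le -2\kp\xi_m\xi_c\|D_{ji}\|^2$, which after the factor $2$ from differentiating a square yields the rate $4\kp\xi_m\xi_c$ on $\opnorm{\mathcal A-\tilde{\mathcal A}}_2^2$. The coupling blocks, together with the \emph{perturbation} of $C_i$ away from $\xi_i\xi_c I_p$, assemble into a gain term that collapses — using $\sum_i\xi_i D_{ji} = N(G_j-\tilde G_j)$ with $G_j := \tfrac1N\sum_k\xi_k A_{jk}$ and the relation $H_i = G_i^\top$ — onto a positive multiple of the averaged quantities $\sum_j\|(G_j-\tilde G_j)^{\mathrm{anti}}\|^2$. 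The decisive algebraic fact, which I would verify by a symmetrization using $H_i = G_i^\top$, is that the damping cancels the \emph{symmetric} part of this gain exactly, so only the antisymmetric part survives; this is precisely why the gain affects only the coefficient of $\opnorm{(\mathcal A-\mathcal A^\top)-(\tilde{\mathcal A}-\tilde{\mathcal A}^\top)}_2^2$ and not the symmetric rate.

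The three errors I would bound as follows. Heterogeneity contributes \emph{nothing} to $\frac{\dd}{\dt}\opnorm{\mathcal A-\tilde{\mathcal A}}_2^2$, since $\Tr(D_{ji}^\top D_{ji}\Xi_i)$ and $\Tr(\Xi_j D_{ji}D_{ji}^\top)$ both pair a symmetric matrix with a skew one and vanish; it enters only through the antisymmetric quantity, where writing $\Xi_i = \Xi_j + (\Xi_i-\Xi_j)$ and $\|\Xi_i-\Xi_j\|\le\mathcal D(\Xi)$ gives, after a Young inequality and the bound $\opnorm{\text{antisym}}_2^2 \le 4\opnorm{\mathcal A-\tilde{\mathcal A}}_2^2$, the $\mathcal D(\Xi)$-term of $\veps$. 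The deviations $\|C_i-\xi_i\xi_c I_p\|$ and $\|\tilde A_{ji}-I_p\|$ are each controlled by the single-solution diameters through $\|A_{ik}-I_p\| = \|S_i^\top(S_k-S_i)\|\le\mathcal D(\mathcal S)$, producing the $\kp\xi_M^2\sqrt p\,(\mathcal D(\mathcal S)+\mathcal D(\tilde{\mathcal S}))$-term (the $\sqrt p$ appearing when an $\|I_p\|$-type factor is measured in the Frobenius norm). Finally the mismatch between the $\xi$-weighted averages in the gain and the unweighted damping is quantified by $\mathcal D(\xi)$, giving the $\kp\xi_M\mathcal D(\xi)$-term.

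The hard part will be the sharp antisymmetric rate $4\xi_m\xi_c-\xi_M^2$. Since the gain is anti-dissipative it must be dominated by the damping, so the essential estimate is to bound $\sum_j\|(G_j-\tilde G_j)^{\mathrm{anti}}\|^2$ by $\sum_{ij}\|D_{ji}^{\mathrm{anti}}\|^2$ with the correct constant; a naive Cauchy--Schwarz is too lossy. The decisive point is the index-antisymmetry $D_{ij}^{\mathrm{anti}} = -D_{ji}^{\mathrm{anti}}$ forced by $A_{ij}=A_{ji}^\top$, which (reducing to an inequality of the form $\tfrac1N\|W\mathbf 1\|^2 \le \tfrac12\|W\|_F^2$ for a skew array $W_{ij}=D_{ij}^{\mathrm{anti}}$) improves the averaging constant and, combined with $\xi_k\le\xi_M$, sets the $\xi_M^2$ gain against the $4\xi_m\xi_c$ damping so that $(\mathcal F1)$, namely $\xi_M^2 < 4\xi_m\xi_c$, is exactly the threshold for net dissipation of the antisymmetric component. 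Tracking every constant through this cancellation — rather than settling for coarse bounds — is the delicate bookkeeping on which the precise coefficients in the statement rest, and is where I would expect the computation to demand the most care.
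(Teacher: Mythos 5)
Your plan follows essentially the same route as the paper's proof: differentiate the correlation differences $A_{ji}-\tilde A_{ji}$, track $\opnorm{\mathcal A-\tilde{\mathcal A}}_2^2$ and the antisymmetrized quantity separately, use that the heterogeneity drops out of the first and contributes $\mathcal D(\Xi)$ to the second, observe that the coupling gain is purely antisymmetric (so it cancels exactly in the antisymmetric evolution and enters the symmetric one only through $\kp\xi_M^2\opnorm{(\mathcal A-\mathcal A^\top)-(\tilde{\mathcal A}-\tilde{\mathcal A}^\top)}_2^2$), and control the remaining errors by $\mathcal D(\mathcal S)$, $\mathcal D(\tilde{\mathcal S})$, $\mathcal D(\xi)$. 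The one misjudgment is your claim that a naive Cauchy--Schwarz on the average is too lossy for the antisymmetric rate: the plain estimate $N\sum_i\big\|\tfrac1N\sum_k \xi_k\big((A_{ki}-\tilde A_{ki})-(A_{ki}-\tilde A_{ki})^\top\big)\big\|^2\le \xi_M^2\sum_{i,k}\|(A_{ki}-\tilde A_{ki})-(A_{ki}-\tilde A_{ki})^\top\|^2$ is exactly what the paper uses and already delivers the coefficient $\xi_M^2$ against $4\xi_m\xi_c$, so the skew-array refinement you anticipate is unnecessary (though harmless).
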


Next, we find temporal evolution of the maximal diameter $\mathcal D(\mathcal S)$. 
\begin{lemma} \label{L3.2}
Let $\mathcal S$ be a solution to \eqref{main}. Then, the maximal diameter $\mathcal D(\mathcal S)$ satisfies
\[
\frac\dd\dt \mathcal D(\mathcal S) \leq -\frac{\kp \xi_m^2 }{2} \mathcal D(\mathcal S) + \frac{\kp \xi_m^2}{4}\mathcal D(\mathcal S)^3 +2\sqrt p \mathcal D(\Xi), \quad t>0.
\]
\end{lemma}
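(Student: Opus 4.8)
The plan is to differentiate the maximal diameter $\mathcal D(\mathcal S)$ and establish a scalar differential inequality with exactly the cubic-correction structure stated. Since $\mathcal D(\mathcal S) = \max_{i,j}\|S_i-S_j\|$ is a maximum of finitely many smooth functions, at a generic time $t$ the maximum is attained by a particular pair of indices, say $(M,m)$, and one uses the upper Dini derivative
\[
\frac{\dd}{\dt}\mathcal D(\mathcal S) = \frac{1}{\|S_M-S_m\|}\,\Tr\!\left((S_M-S_m)^\top(\dot S_M-\dot S_m)\right),
\]
justified by Danskin-type reasoning. I would substitute the dynamics \eqref{main} for $\dot S_M$ and $\dot S_m$, so that the right-hand side splits into the heterogeneity contribution coming from the $S_i\Xi_i$ terms and the coupling contribution coming from the $\kappa(\cdots)$ terms.

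For the heterogeneity part, I would bound $\Tr\big((S_M-S_m)^\top(S_M\Xi_M-S_m\Xi_m)\big)$. Writing $S_M\Xi_M-S_m\Xi_m = (S_M-S_m)\Xi_M + S_m(\Xi_M-\Xi_m)$, the first piece involves $\Tr\big((S_M-S_m)^\top(S_M-S_m)\Xi_M\big)$, which vanishes because $\Xi_M$ is skew-symmetric and the matrix $(S_M-S_m)^\top(S_M-S_m)$ is symmetric (the trace of symmetric times skew is zero). The surviving term is $\Tr\big((S_M-S_m)^\top S_m(\Xi_M-\Xi_m)\big)$, which by Cauchy–Schwarz and $\|S_m\|=\sqrt p$ is bounded by $\|S_M-S_m\|\cdot\sqrt p\,\|\Xi_M-\Xi_m\|\le \|S_M-S_m\|\sqrt p\,\mathcal D(\Xi)$; after dividing by $\|S_M-S_m\|$ this yields the clean $\sqrt p\,\mathcal D(\Xi)$ contribution (the factor $2$ in the statement presumably absorbs an analogous estimate when the roles are symmetrized or a cross term is split).

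The main work — and the expected obstacle — is the coupling part. Here I would expand $\kappa(S_{ic}-\tfrac12(S_iS_i^\top S_{ic}+S_iS_{ic}^\top S_i))$ for $i=M,m$ and compute the inner product against $(S_M-S_m)$. The goal is to extract the leading dissipative term $-\tfrac{\kappa\xi_m^2}{2}\mathcal D(\mathcal S)$ together with the cubic remainder $+\tfrac{\kappa\xi_m^2}{4}\mathcal D(\mathcal S)^3$. The natural route is to use $S_{ic}=\tfrac1N\sum_k a_{ik}S_k$ and rewrite each difference in terms of $S_k-S_M$ and $S_k-S_m$, so that the Stiefel constraint $S_i^\top S_i=I_p$ can be invoked to symmetrize. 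The quantity $\|S_i-S_k\|^2 = 2\Tr(I_p - S_i^\top S_k)$ will appear, and the cubic term arises precisely from the quadratic projection correction $-\tfrac12(S_iS_i^\top S_{ic}+\cdots)$ when one bounds factors of $\|S_i-S_k\|^2$ by $\mathcal D(\mathcal S)^2$ and pulls out one remaining factor of $\mathcal D(\mathcal S)$. I expect the delicate point to be tracking the coefficient $\xi_m^2$ sharply: one must bound the network weights $a_{ik}=\xi_i\xi_k$ (or the symmetric connected topology) from below by $\xi_m^2$ at the right place while ensuring the resulting dissipation constant is not over-counted, and simultaneously arrange the projection-correction terms so they contribute a nonnegative quantity controlled by $\tfrac{\kappa\xi_m^2}{4}\mathcal D(\mathcal S)^3$ rather than spoiling the sign of the linear dissipation. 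Once both contributions are assembled and divided by $\|S_M-S_m\|$, the stated inequality follows; I would close by noting the bound holds for the Dini derivative and hence, by a standard comparison argument, for $\mathcal D(\mathcal S)$ itself.
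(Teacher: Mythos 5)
Your setup (upper Dini derivative at a maximizing pair, followed by a comparison argument) and your treatment of the heterogeneity term are both sound and consistent with what the paper does: the paper's version of that step is the bound $|\tr((A_{ji}-A_{ij})(\Xi_i-\Xi_j))|\le 2\sqrt p\,\mathcal D(\Xi)$, and your decomposition $S_M\Xi_M-S_m\Xi_m=(S_M-S_m)\Xi_M+S_m(\Xi_M-\Xi_m)$ with the symmetric-times-skew cancellation reproduces it (in fact slightly more sharply).

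The genuine gap is the coupling term, which is the entire content of the lemma and which you only describe as a plan (``the goal is to extract\ldots'', ``I expect the delicate point to be\ldots'') without executing it. The coefficients $-\tfrac{\kp\xi_m^2}{2}$ and $+\tfrac{\kp\xi_m^2}{4}$ are never derived. The paper obtains them by importing \cite[Lemma~3.3]{HKK21}, namely
\begin{align*}
\frac{\dd}{\dd t}\|S_i-S_j\|^2 &\le -\frac{\kp}{N}\sum_{k=1}^N a_{ik}\big(\|S_i-S_j\|^2-\|S_j-S_k\|^2\big) -\frac{\kp}{N}\sum_{k=1}^N a_{jk}\big(\|S_i-S_j\|^2-\|S_i-S_k\|^2\big)\\
&\quad -\big(2-\|S_i-S_j\|^2\big)\,\frac{\kp}{2N}\sum_{k=1}^N\big(a_{ik}\|S_i-S_k\|^2+a_{jk}\|S_j-S_k\|^2\big) + \tr\big((A_{ji}-A_{ij})(\Xi_i-\Xi_j)\big),
\end{align*}
and then exploiting two facts your sketch does not supply: (i) at the maximizing pair each bracket in the first two sums is nonnegative, which is what licenses replacing $a_{ik}=\xi_i\xi_k$ by $\xi_m^2$ in the dissipative terms --- this uses the maximality of the pair, not merely the Stiefel constraint you invoke; and (ii) the factor $(2-\|S_i-S_j\|^2)$ coming from the projection correction is the single source of both the remaining linear dissipation and the cubic term $\mathcal D(\mathcal S)^3$, so the linear and cubic coefficients are tied together rather than obtained from two separate estimates as your outline suggests. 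You also do not note that the constants in the statement presuppose the separable topology $a_{ik}=\xi_i\xi_k$ (otherwise $\xi_m$ is undefined), and the issue you correctly flag --- controlling the positive cubic remainder with a coefficient involving $\xi_m$ rather than $\xi_M$ --- is left unresolved. Without carrying out this computation, or citing the known differential inequality above, the stated estimate is not established.
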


Lastly, under the framework $(\mathcal F)$, we show that the maximal diameter $\mathcal D(\mathcal S)$ can be made small by increasing $\kp$. 

\begin{lemma} \label{L3.3}
 Suppose that initial data and system parameters satisfy the framework $(\mathcal F)$, and let $\mathcal S$ be a solution to \eqref{main}. Then, we have
 \[
 \sup_{t>0}\mathcal D(\mathcal S(t))< \frac{\xi_m\xi_c-3\xi_MD(\xi)-\frac{D(\Xi)}{\kappa}}{10 \xi_M^2 \sqrt p}.
 \]
\end{lemma}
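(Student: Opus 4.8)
The plan is to read Lemma~\ref{L3.2} as a scalar cubic differential inequality and to close it with a barrier (invariant-interval) argument. Writing $x(t):=\mathcal D(\mathcal S(t))$ and
\[
f(x):=-\frac{\kp\xi_m^2}{2}x+\frac{\kp\xi_m^2}{4}x^3+2\sqrt p\,\mathcal D(\Xi),
\]
Lemma~\ref{L3.2} states $\dot x\le f(x)$. Denote the target bound by
\[
B:=\frac{\xi_m\xi_c-3\xi_M\mathcal D(\xi)-\frac{\mathcal D(\Xi)}{\kp}}{10\xi_M^2\sqrt p},
\]
so the goal is $\sup_{t>0}x(t)<B$. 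The whole proof rests on two facts: that $x(0)<B$, which is exactly $(\mathcal F4)$, and that $f(B)<0$, which must be extracted from $(\mathcal F1)$--$(\mathcal F3)$; granting these, a standard comparison argument finishes the job.

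First I would record elementary size bounds on $B$. Condition $(\mathcal F2)$ gives $P:=\xi_m\xi_c-3\xi_M\mathcal D(\xi)>0$, and $(\mathcal F3)$ bounds $\mathcal D(\Xi)/\kp$ strictly below $P$, so the numerator of $B$ is positive and $B>0$. Using $\xi_m\le\xi_M$ and $\xi_c\le\xi_M$ one gets $P\le\xi_M^2$, hence $B\le\frac{1}{10\sqrt p}\le\frac1{10}$; in particular $B^2\le\frac1{100p}$ and therefore $1-\frac{B^2}{2}\ge 1-\frac1{200p}$.

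The crux is to verify $f(B)<0$. Since $B>0$, this is equivalent to $B\bigl(1-\tfrac{B^2}{2}\bigr)>\frac{4\sqrt p\,\mathcal D(\Xi)}{\kp\xi_m^2}$. Replacing $1-\frac{B^2}{2}$ by its lower bound $1-\frac1{200p}$ and substituting $B=\frac{P-\mathcal D(\Xi)/\kp}{10\xi_M^2\sqrt p}$, it suffices to show
\[
\Bigl(P-\frac{\mathcal D(\Xi)}{\kp}\Bigr)\Bigl(1-\frac1{200p}\Bigr)>\frac{40\xi_M^2p}{\xi_m^2}\cdot\frac{\mathcal D(\Xi)}{\kp}.
\]
Writing $\delta:=\mathcal D(\Xi)/\kp$, $a:=1-\frac1{200p}$ and $c:=\frac{40\xi_M^2p}{\xi_m^2}$, this rearranges to $\delta<\frac{Pa}{a+c}$; since $2-\frac1{100p}=2a$ and $\frac{80\xi_M^2p}{\xi_m^2}=2c$, the right-hand side $\frac{Pa}{a+c}=P\frac{2a}{2a+2c}$ is exactly the bound imposed in $(\mathcal F3)$. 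This is the step where the delicate tuning of the framework constants is spent, and where the order of operations matters: one must bound $1-\frac{B^2}{2}$ from below \emph{before} substituting $B$, or the algebra fails to collapse onto $(\mathcal F3)$. I expect this constant-matching to be the only genuine obstacle.

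Finally I would run the barrier argument via the comparison equation $\dot y=f(y)$. Because $B\le\frac1{10}<\sqrt{2/3}$ and $f'(x)=\frac{\kp\xi_m^2}{4}(3x^2-2)$, the map $f$ is strictly decreasing on $(0,B]$; combined with $f(0)=2\sqrt p\,\mathcal D(\Xi)\ge0$ and $f(B)<0$, this yields a unique root $x^\ast\in[0,B)$ with $f>0$ on $[0,x^\ast)$ and $f<0$ on $(x^\ast,B]$. Hence $x^\ast$ is attracting and any solution of $\dot y=f(y)$ with $y(0)\in[0,B)$ obeys $y(t)\le\max(y(0),x^\ast)<B$ for all $t$. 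Since $x$ is Lipschitz (a maximum of smooth functions) with $x(0)=\mathcal D(\mathcal S^\init)<B$ by $(\mathcal F4)$, the comparison principle for Dini derivatives gives $x(t)\le y(t)$ for the choice $y(0)=x(0)$, whence $\sup_{t>0}x(t)\le\max(x(0),x^\ast)<B$, which is the claim.
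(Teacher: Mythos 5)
Your proposal is correct and follows essentially the same route as the paper: reduce Lemma \ref{L3.2} to the cubic differential inequality, use $B\le\frac{1}{10\sqrt p}$ to bound the cubic term, collapse the resulting inequality onto $(\mathcal F3)$ to get $f(B)<0$, and conclude by a barrier/invariance argument starting from $(\mathcal F4)$. The only difference is cosmetic — you spell out the comparison principle and the monotonicity of $f$ on $(0,B]$ a bit more explicitly than the paper does.
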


\noindent \textbf{(Proof of Theorem \ref{T1.1})}: We are now ready to provide the proof of Theorem \ref{T1.1}. It  follows from $(\mathcal F1)$ in \eqref{F1} that $4\xi_m \xi_c -\xi_M^2>0$. We use Lemma \ref{L3.3}, $(\mathcal F2)$ in $\eqref{F1}_2$ and $(\mathcal F3)$ in $\eqref{F3}_2$ that for $t>0$, 
\[
\sup_{t>0}5\kp \xi_M^2 \sqrt p ( \mathcal D(\mathcal S(t)) + \mathcal D(\tilde{\mathcal S}(t))) < \kappa(\xi_m\xi_c-3\xi_MD(\xi)-\frac{D(\Xi)}{\kappa}), \]
which gives
\[
\inf_{t>0}(\kappa\xi_m\xi_c - \veps) >0.
\]
Hence, Lemma \ref{L3.1} yields
\[
\frac\dd\dt \mathcal D(\mathcal A)  < - \delta \mathcal D(\mathcal A),\quad \delta:= \min \{ 4\inf_{t>0}(\kappa\xi_m\xi_c - \veps), \kappa(4\xi_m\xi_c-\xi_M^2) \}>0.
\]
In particular, 
\[
\| A_{ij} (t) - \tilde A_{ji} (t) \| \leq  \opnorm{\mathcal A (t) -\tilde{\mathcal A}(t)} \leq \mathcal D(\mathcal A^0) e^{-\delta t},\quad t>T_*.
\]
Finally, we use Lemma \ref{conv} to conclude that for each $i,j\in [N]$, there exists a constant matrix $A_{ji}^\infty \in \St$ such that
\[
\lim_{t\to\infty} S_j^\top S_i(t) = A_{ji}^\infty.
\]
This completes the proof.

\section{Proof of Theorem \ref{T1.2}} \label{sec:4} 
\setcounter{equation}{0}
Before we provide the proof of Theorem \ref{T1.1}, we first introduce a following key lemma. 

\begin{lemma} \label{L4.1}
Let $\mathcal S$ and $\tilde{\mathcal S}$ be any two solutions to \eqref{main}. Then, we have
\[
\frac\dd\dt \|S_i - \tilde S_i\| \leq \frac{\kp}{N}\sum_{k=1}^N  a_{ik} \|S_k - \tilde S_k\| - \frac{\kp}{N} \sum_{k=1}^N a_{ik} \|S_i - \tilde S_i\| + \frac{\kp\mathcal Z(t)}{N} \sum_{k=1}^N a_{ik} \|S_i -\tilde S_i\|
\]
where $\mathcal Z(t) := \max\{\mathcal D(\mathcal S(t)),\mathcal D(\tilde{\mathcal S}(t)) \}$. 
\end{lemma}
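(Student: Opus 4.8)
The plan is to estimate the Frobenius norm directly. Set $e_i := S_i - \tilde S_i$ and write $\langle A,B\rangle := \tr(A^\top B)$ for the Frobenius inner product, so that $\frac{\dd}{\dt}\|e_i\| = \|e_i\|^{-1}\langle e_i,\dot e_i\rangle$ wherever $e_i\neq 0$. The first observation I would record is that the natural-frequency part of $\dot e_i$, namely $(S_i-\tilde S_i)\Xi_i = e_i\Xi_i$, contributes nothing: $\langle e_i,e_i\Xi_i\rangle = \tr(e_i^\top e_i\,\Xi_i)=0$ since $e_i^\top e_i$ is symmetric and $\Xi_i$ skew-symmetric. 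This is exactly why no $\Xi$-dependence survives in the bound, and it reduces the task to estimating $\kp\langle e_i,F_i-\tilde F_i\rangle$ for the consensus force $F_i := S_{ic}-\tfrac12(S_iS_i^\top S_{ic}+S_iS_{ic}^\top S_i)$.

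Next I would expand $F_i-\tilde F_i$ by telescoping the cubic products, e.g.
\[
S_iS_i^\top S_{ic}-\tilde S_i\tilde S_i^\top\tilde S_{ic}=e_iS_i^\top S_{ic}+\tilde S_ie_i^\top S_{ic}+\tilde S_i\tilde S_i^\top e_{ic},\qquad e_{ic}:=\tfrac1N\sum_k a_{ik}e_k,
\]
and likewise for $S_iS_{ic}^\top S_i-\tilde S_i\tilde S_{ic}^\top\tilde S_i$. Pairing with $e_i$ gives $\langle e_i,e_{ic}\rangle$ plus six trace terms, and the whole computation is then driven by the identities forced by $S_i^\top S_i=\tilde S_i^\top\tilde S_i=I_p$: with $g:=S_i^\top e_i=I_p-S_i^\top\tilde S_i$ one has $\tilde S_i^\top e_i=-g^\top$, $e_i^\top e_i=g+g^\top$, and crucially $\|\tilde S_i^\top e_i\|=\|S_i^\top e_i\|$. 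I would then split every term into a ``diagonal'' part (replace $S_k,\tilde S_k,e_k$ by $S_i,\tilde S_i,e_i$) and an off-diagonal remainder estimated through the diameters $\|S_k-S_i\|\le\mathcal D(\mathcal S)\le\mathcal Z$, $\|\tilde S_k-\tilde S_i\|\le\mathcal D(\tilde{\mathcal S})\le\mathcal Z$ and $\|e_k-e_i\|\le\mathcal D(\mathcal S)+\mathcal D(\tilde{\mathcal S})\le 2\mathcal Z$. The three terms carrying $e_{ic}$ regroup as $\tfrac1N\sum_k a_{ik}\langle(I-\tfrac12(\tilde S_i\tilde S_i^\top-S_iS_i^\top))e_i,e_k\rangle$, whose $\langle e_i,e_k\rangle$-part furnishes the coupling $\tfrac1N\sum_k a_{ik}\|e_i\|\|e_k\|$; the two terms with $e_i$ on the left sum to $-\tr\!\big(e_i^\top e_i\operatorname{sym}(S_i^\top S_{ic})\big)=-\tfrac1N\sum_k a_{ik}\|e_i\|^2+O(\mathcal Z)\|e_i\|^2$ (here $\operatorname{sym}(X):=\tfrac12(X+X^\top)$), giving the damping; and the last two, genuinely bilinear in $e_i$, cancel on the diagonal via $\tilde S_i^\top e_i=-g^\top$ and leave an $O(\mathcal Z)\|e_i\|^2$ remainder.

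The step I expect to be the main obstacle is the leftover cross term $-\tfrac1{2N}\sum_k a_{ik}\langle(\tilde S_i\tilde S_i^\top-S_iS_i^\top)e_i,e_k\rangle$ produced by the regrouping: a naive Cauchy–Schwarz bound yields only $O(\|e_i\|^2\|e_k\|)$, which does not collapse into the required $\mathcal Z\|e_i\|^2$ shape because $\|e_k\|$ need not be small. The remedy is to write $e_k=e_i+(e_k-e_i)$. The diagonal contribution $\langle(\tilde S_i\tilde S_i^\top-S_iS_i^\top)e_i,e_i\rangle=\|\tilde S_i^\top e_i\|^2-\|S_i^\top e_i\|^2$ vanishes exactly by the norm identity above, and what remains is $\langle(\tilde S_i\tilde S_i^\top-S_iS_i^\top)e_i,e_k-e_i\rangle$, bounded by $\|(\tilde S_i\tilde S_i^\top-S_iS_i^\top)e_i\|\,\|e_k-e_i\|\lesssim\|e_i\|^2\mathcal Z$ once one notes $\tilde S_i\tilde S_i^\top-S_iS_i^\top=-e_i\tilde S_i^\top-S_ie_i^\top$ and $\|S_i\|_{\mathrm{op}}=\|\tilde S_i\|_{\mathrm{op}}=1$, whence $\|(\tilde S_i\tilde S_i^\top-S_iS_i^\top)e_i\|\le 2\|e_i\|^2$.

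Assembling the pieces gives $\langle e_i,F_i-\tilde F_i\rangle\le\tfrac1N\sum_k a_{ik}\|e_i\|\|e_k\|-\tfrac1N\sum_k a_{ik}\|e_i\|^2+\tfrac{\mathcal Z}{N}\sum_k a_{ik}\|e_i\|^2$; dividing by $\|e_i\|$ and multiplying by $\kp$ yields the stated inequality, with the constants tracked so that coupling and damping carry weight one. At instants where $e_i=0$ I would read $\frac{\dd}{\dt}\|e_i\|$ as an upper Dini derivative, which is harmless since the right-hand side is then nonnegative. The only genuine work is the bookkeeping in the preceding two paragraphs, i.e. checking that after all the cancellations nothing is left beyond a remainder of size $\mathcal Z\|e_i\|^2$.
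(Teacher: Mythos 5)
Your proposal is correct in structure and follows essentially the same route as the paper: kill the $\Xi_i$ term by skew-symmetry, telescope the cubic consensus terms, extract the coupling by Cauchy--Schwarz and the damping from $S_i^\top S_i=I_p$, and absorb everything else into an $O(\mathcal Z)\|e_i\|^2$ remainder. The one place where you diverge is in how the dangerous bilinear-in-$e_i$ terms are cancelled. The paper groups the four terms $\mathcal I_{121}+\mathcal I_{131}+\mathcal I_{221}+\mathcal I_{231}$ and observes that their common factor $(S_i^\top-\tilde S_i^\top)S_i+\tilde S_i^\top(S_i-\tilde S_i)=S_i^\top S_i-\tilde S_i^\top\tilde S_i=0$ vanishes \emph{identically}, so those terms contribute nothing at all; you instead cancel only the ``diagonal'' part (replacing $S_k,\tilde S_k$ by $S_i,\tilde S_i$) via the identity $\|\tilde S_i^\top e_i\|=\|S_i^\top e_i\|$ and pay an extra $O(\mathcal Z)\|e_i\|^2$ for the off-diagonal remainder. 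Both are valid, but your bookkeeping then yields a coefficient strictly larger than $1$ (roughly $3$--$4$) in front of $\frac{\kp\mathcal Z}{N}\sum_k a_{ik}\|S_i-\tilde S_i\|$, whereas the lemma as stated has coefficient $1$; the paper's exact cancellation is what keeps the constant at $1$. This discrepancy is immaterial for the application (Theorem \ref{T1.2} only needs $\int_0^\infty\mathcal Z\,\dd t<\infty$ together with Lemma \ref{L2.2}), but to prove the statement literally as written you should either adopt the paper's pairing or restate the bound with a generic constant.
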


We now provide the proof of Theorem \ref{T1.2}. 

\vspace{0.5cm}

\noindent \textbf{(Proof of Theorem \ref{T1.2}(1))}:  For the first assertion, we sum the relation in Lemma \ref{L4.1} with respect to $i\in [N]$ to find

\begin{align*}
\frac\dd\dt \sum_{i=1}^N \|S_i - \tilde S_i\| &\leq \frac{\kp}{N} \sum_{i,k=1}^N a_{ik} \|S_k - \tilde S_k\| - \frac{\kp }{N} \sum_{i,k=1}^N a_{ik} \|S_i - \tilde S_i\| + \frac{\kp \mathcal Z(t)}{N} \sum_{i,k=1}^N a_{ik} \|S_i - \tilde S_i\| \\
& \leq \kp a_M \mathcal Z(t) \sum_{i=1}^N \|S_i -\tilde S_i\| 
\end{align*}
where $a_M$ is the maximum of $\{a_{ik}\}$. Since we assumed that asymptotic complete consensus is a priori achieved, $\mathcal Z(t)$ converges to zero exponentially. Note from \cite{HKK22} that the convergence rate is always exponential.  Thus, the proof directly follows from Lemma \ref{L2.2}.

\vspace{0.5cm}

\noindent \textbf{(Proof of Theorem \ref{T1.2}(2))}: For the second assertion, we assumed that the network satisfies the separability condition, i.e., $a_{ik}  = \xi_i\xi_k$. We simply denote $x_i := \|S_i - \tilde S_i\|.$ In Lemma \ref{L4.1}, by  multiplying both sides with $p\|S_i - \tilde S_i\|^{p-1}$, one finds 
\begin{align} \label{C-46}
\begin{aligned}
\frac\dd\dt \|S_i - \tilde S_i\|^p & \leq \frac{\kp p }{N} \sum_{k=1}^N \xi_i\xi_k \|S_k - \tilde S_k\| \|S_i - \tilde S_i\|^{p-1} - \frac{\kp p}{N} \sum_{k=1}^N \xi_i\xi_k \|S_i- \tilde S_i\|^p \\
&\hspace{0.5cm}+ \frac{\kp p \mathcal Z(t) }{N} \sum_{k=1}^N \xi_i\xi_k \|S_i - \tilde S_i\|^p.
\end{aligned}
\end{align}
We sum \eqref{C-46} with respect to $i\in [N]$ to obtain
\begin{align} \label{C-50}
\begin{aligned}
\frac\dd\dt \sum_{i=1}^N \|S_i - \tilde S_i\|^p & \leq \frac{\kp p}{N} \sum_{i,k=1}^N \xi_i\xi_k \|S_k - \tilde S_k\|\|S_i - \tilde S_i \|^{p-1}  - \frac{\kp p }{N} \sum_{i,k=1}^N\xi_i\xi_k \|S_i - \tilde S_i\|^p \\
&\hspace{0.5cm} + \frac{\kp p \mathcal Z(t) } {N} \sum_{i,k=1}^N \xi_i\xi_k \|S_i - \tilde S_i\|^p.
\end{aligned} 
\end{align}
In \eqref{C-50}, it suffices to show that
\[
\frac{\kp p}{N} \sum_{i,k=1}^N \xi_i\xi_k \|S_k - \tilde S_k\|\|S_i - \tilde S_i \|^{p-1}  - \frac{\kp p }{N} \sum_{i,k=1}^N \xi_i\xi_k \|S_i - \tilde S_i\|^p \leq 0
\]
which is rewritten in terms of $x_i$: 
\begin{align*}
\sum_{i,k=1}^N \xi_i \xi_k x_k x_i^{p-1} - \sum_{i,k=1}^N \xi_i \xi_k x_i^p \leq 0.
\end{align*} 
For this, we observe
\begin{align*}
\sum_{i,k=1}^N \xi_i \xi_k x_k x_i^{p-1} &= \left( \sum_{k=1}^N \xi_k x_k \right)\left( \sum_{i=1}^N \xi_i x_i^{p-1}\right)
\end{align*}
and by H\"older's inequality, 
\begin{align*}
&\sum_{k=1}^N \xi_k x_k \leq \left( \sum_{k=1}^N \xi_k x_k^p \right)^\frac1p \left( \sum_{k=1}^N \xi_k \right)^{1-\frac1p}, \quad \sum_{i=1}^N \xi_i x_i^{p-1} \leq \left( \sum_{i=1}^N \xi_i \right)^\frac1p \left( \sum_{i=1}^N \xi_i x_i^p \right)^{1-\frac1p} .
\end{align*}
Thus, we have
\begin{align*}
\sum_{i,k=1}^N \xi_i \xi_k x_k x_i^{p-1} - \sum_{i,k=1}^N \xi_i \xi_k x_i^p \leq \left(  \sum_{i=1}^N \xi_i \right)  \left( \sum_{i=1}^N\xi_i x_i^p \right) - \sum_{i,k=1}^N \xi_i\xi_k x_i^p =0
\end{align*} 
Therefore, \eqref{C-50} gives 
\begin{align*}
\frac\dd\dt \sum_{i=1}^N \|S_i - \tilde S_i\|^p \leq \kp p \mathcal Z(t) \xi_M^2 \sum_{i=1}^N \|S_i - \tilde S_i\|^p . 
\end{align*}
These complete the proof. 

\begin{remark}
Without any assumption on initial data, it follows from \eqref{C-46} that  there exists $C>0$ 
\[
\opnorm{\mathcal S(t) -  \tilde{\mathcal S}(t)} \leq e^{Ct} \opnorm{\mathcal S^\init- \tilde{\mathcal S}^\init}, \quad t>0.
\]
Since the right-hand side tends to infinity as $t\to\infty$, the estimate above is not uniform-in-time. However, if we impose some initial conditions, for instance, leading to asymptotic complete consensus, then we can make the estimate uniform-in-time. 
\end{remark}

 \section{Conclusion} \label{sec:5}
We have studied the asymptotic stability of the consensus model on the Stiefel manifold, also known as the high-dimensional Kuramoto model, where a natural frequency is introduced to make the model heterogeneous. With the introduction of the natural frequency, the model is no longer a gradient flow, and this heterogeneity leads to various emergent dynamics. In this paper, we focus on the emergence of asymptotic consensus and provide a sufficient framework that ensures it. As a direct consequence of this asymptotic behavior, we establish uniform-in-time stability, which can be applied to the mean-field setting when the number of agents is sufficiently large.

 \appendix
 \section{Proof of Lemmas} \label{sec:A}
\setcounter{equation}{0}

\subsection{Proof of Lemma \ref{L3.1}}
We provide the proof of Lemma \ref{L3.1} in which the differential inequality for $\mathcal D(\mathcal A)$ is derived. \newline

\noindent $\bullet$ (Step A): First, we derive a differential inequality for $\opnorm{\mathcal A - \tilde{\mathcal A}}_2^2$. Recall from \cite[Appendix A]{HKK21} that
\begin{align} \label{C-12-2}
\begin{aligned}
\frac{\dd}{\dd t} (A_{ji} - \tilde A_{ji}) &= (A_{ji} - \tilde A_{ji})\Xi_i - \Xi_j(A_{ji} - \tilde A_{ji} ) + \frac{\kp}{2N} \sum_{k=1}^N \mathcal J_{1k} \\
&\hspace{0.5cm}+\frac{\kp}{2N} \sum_{k=1}^N \Big(a_{ik}(( A_{jk} - \tilde A_{jk}) - (A_{kj} - \tilde A_{kj}))  \\
&\hspace{3cm}+ a_{jk}(( A_{ki} - \tilde  A_{ki}) - (A_{ik} - \tilde A_{ik})) \Big),
\end{aligned}
\end{align}
where $\mathcal J_{1k}$ is introduced in \eqref{C-12-3}. By multiplying $(A_{ji} - \tilde A_{ji})^\top$, we find
 \begin{align} \label{C-13}
\begin{aligned}
 &\frac12\frac{\dd}{\dd t} \|A_{ji} - \tilde A_{ji}\|^2  \\
 &\hspace{0.5cm}= \textup{tr} \big[ \{ (A_{ji} - \tilde A_{ji})\Xi_i - \Xi_j(A_{ji} - \tilde A_{ji} ) \}       (A_{ji} - \tilde A_{ji})^\top  \big] + \frac{\kp}{2N} \sum_{k=1}^N \textup{tr}[ \mathcal J_{1k}     (A_{ji} - \tilde A_{ji})^\top    ]  \\
 &\hspace{1cm}+ \frac{\kp}{2N} \sum_{k=1}^N a_{jk} \textup{tr}[ ( ( A_{ki} - \tilde A_{ki}) - (A_{ik} - \tilde A_{ik})    )   (A_{ji} - \tilde A_{ji})^\top ] \\
 &\hspace{1cm}+ \frac{\kp}{2N} \sum_{k=1}^N a_{ik}\textup{tr}[ ( ( A_{jk} - \tilde A_{jk}) - (A_{kj} - \tilde A_{kj})    )   (A_{ji} - \tilde A_{ji})^\top ]  \\
 &\hspace{0.5cm} = : \mathcal J_2 + \frac{\kp}{2N} \sum_{k=1}^N \mathcal J_{3k} + \frac{\kp}{2N} \sum_{k=1}^N \mathcal J_{4k} + \frac{\kp}{2N} \sum_{k=1}^N \mathcal J_{5k},
\end{aligned}
\end{align}
where $\mathcal J_{1k}$ is defined as 
\begin{align} \label{C-12-3}
\begin{aligned}
\mathcal J_{1k} &: = \big( a_{ik} ( A_{jk} - \tilde A_{jk}) - a_{jk}(A_{jk} A_{ji} - \tilde A_{jk} \tilde A_{ji}) \big)  \\
&\hspace{0.5cm}+ \big( a_{ik} ( A_{kj} - \tilde A_{kj}) - a_{jk}(A_{kj} A_ {ji} - \tilde A_{kj} \tilde A_{ji}) \big)  \\
&\hspace{0.5cm}+ \big( a_{jk} (A_{ki} - \tilde A_{ki}) - a_{ik} (A_{ji} A_{ki} - \tilde A_{ji} \tilde A_{ki} )        \big)  \\
&\hspace{0.5cm}+ \big(   a_{jk}(A_{ik} -\tilde A_{ik}) - a_{ik}(A_{ji} A_{ik} - \tilde A_{ji} \tilde A_{ik})     \big) \\
& = : \mathcal J_{1k,1} + \mathcal J_{1k,2} + \mathcal J_{1k,3} + \mathcal J_{1k,4}. 
\end{aligned}
\end{align}
Since $\mathcal J_{1k,j},~j=1,\ldots,4$ share a common structure, it suffices to consider $\mathcal J_{1k,1}$:
\begin{align} \label{C-12-4}
\begin{aligned}
\mathcal J_{1k,1}& = a_{ik} ( A_{jk} - \tilde A_{jk}) - a_{jk}(A_{jk} A_{ji} - \tilde A_{jk} \tilde A_{ji}) \\
& = -a_{jk}(A_{ji} - \tilde A_{ji}) + a_{jk}(I_p - \tilde A_{jk} )(A_{ji} - \tilde A_{ji}) + a_{jk}(A_{jk} - \tilde A_{jk}) (I_p - A_{ji})  \\
&\hspace{0.5cm}+ (a_{ik} - a_{jk})(A_{jk} - \tilde A_{jk}).
\end{aligned}
\end{align}
$\diamond$ (Estimate of $\mathcal J_{2}$): since $\Xi_i$ and $\Xi_j$ are skew-symmetric matrices, we easily find 
\[
\mathcal J_2 = 0.
\]
$\diamond$ (Estimate of $\mathcal J_{3k}$): we first consider $\mathcal J_{1k,1}$:
\begin{align*}
&\textup{tr}[  \mathcal J_{1k,1}(A_{ji} - \tilde A_{ji})^\top  ]\\
&   \leq  -a_{jk} \|A_{ji} - \tilde A_{ji}\|^2 + a_{jk}\sqrt{p} \mathcal D(\tilde{\mathcal S}) \|A_{ji} -\tilde A_{ji}\|^2 \\
&  \hspace{0.5cm} + a_{jk}\sqrt{p}\mathcal D( \mathcal S) \|A_{jk} -\tilde A_{jk}\|\cdot \|A_{ji} - \tilde A_{ji}\|\\
&  \hspace{0.5cm} +  (a_{ik} - a_{jk}) \textup{tr} (( A_{jk} - \tilde A_{jk})(A_{ji} - \tilde A_{ji})^\top ) \\
& \leq  -a_{jk} \|A_{ji} - \tilde A_{ji}\|^2 + a_{jk}\sqrt{p} \mathcal D(\tilde{\mathcal S}) \|A_{ji} -\tilde A_{ji}\|^2 \\
&  \hspace{0.5cm} + ( a_{jk}\sqrt{p}\mathcal D( \mathcal S) + |a_{ik} - a_{jk}|)\|A_{jk} -\tilde A_{jk}\|\cdot \|A_{ji} - \tilde A_{ji}\|,
\end{align*}
where we used the following  inequality:
\begin{equation*}
\|I_p - A_{ji} \| = \|S_j^\top S_j - S_j^\top  S_i\|\leq \|S_j \|\cdot \|S_j-S_i\|  \leq \sqrt p D(\mathcal S).
\end{equation*}
Hence, $\mathcal J_{3k}$ can be estimated as  
\begin{align*}
\mathcal J_{3k}&  \leq  \textup{tr}[ (\mathcal J_{1k,1}  +\cdots +  \mathcal J_{1k,4})(A_{ji} - \tilde A_{ji})^\top ]  \\
& \leq -2(a_{ik} + a_{jk}) \|A_{ji} - \tilde A_{ji}\|^2 + 2\sqrt{p}(a_{ik} + a_{jk})\mathcal D( \tilde{\mathcal S})\|A_{ji} - \tilde A_{ji}\|^2 \\
&\hspace{0.5cm} + 2 (a_{jk}\sqrt{p}\mathcal D(\mathcal S) + |a_{ik}-a_{jk}|)\|A_{jk} - \tilde A_{jk}\|\cdot \|A_{ji} - \tilde A_{ji}\| \\
&\hspace{0.5cm} + 2 (a_{ik}\sqrt{p}\mathcal D(\mathcal S) + |a_{ik}-a_{jk}|)\|A_{ik} - \tilde A_{ik}\|\cdot \|A_{ji} - \tilde A_{ji}\| .
\end{align*}
$\diamond$ (Estimate of $\mathcal I_{4k}$): we consider
\[
\frac{\kp}{2N}\sum_{i,j,k=1}^N \mathcal J_{4k} = \frac{\kp}{2N} \sum_{i,j,k=1}^N a_{jk} \textup{tr}[ ( ( A_{ki} - \tilde A_{ki}) - (A_{ik} - \tilde A_{ik})    )   (A_{ji} - \tilde A_{ji})^\top ].
\]
For notational simplicity, we write
 \[
B_{ki} := \xi_k A_{ki},\quad B_{ci} := \frac1N \sum_{k=1}^N B_{ki},\quad A_{ci} := \frac1N \sum_{k=1}^N A_{ki} .
\]
We observe
\begin{align*}
\frac{\kp}{2N}\sum_{i,j,k=1}^N \mathcal J_{4k} & = \frac{\kappa N}{2} \sum_{i=1}^N \textup{tr}[     (B_{ci} - \tilde B_{ci})(B_{ci} - \tilde B_{ci})^\top - (B_{ci} - \tilde B_{ci})^2 ] \\
& = \frac {\kappa N}4 \sum_{i=1}^N \| (B_{ci} - \tilde B_{ci})- (B_{ci} - \tilde B_{ci})^\top \|^2\\
&\leq \frac \kappa 4 \sum_{i,j=1}^N \| (B_{ji} - \tilde B_{ji} ) - (B_{ji} - \tilde B_{ji})^\top\|^2 \\
&\leq  \frac {\kappa \xi_M^2}{4} \sum_{i,j=1}^N \| (A_{ji} - \tilde A_{ji} ) - (A_{ji} - \tilde A_{ji})^\top\|^2.
\end{align*}
$\diamond$ (Estimate of $\mathcal J_{5k}$): By using the exactly same way, 
\[
\frac{\kp}{2N}\sum_{i,j,k=1}^N \mathcal J_{5k} =\frac {\kappa N}4 \sum_{i=1}^N \| (B_{ci} - \tilde B_{ci})- (B_{ci} - \tilde B_{ci})^\top \|^2
\]
Note that
\[
|a_{ik} - a_{jk}| = |\xi_k| |\xi_i - \xi_j| \leq \xi_M \mathcal D(\xi)
\]
and
\[
\sum_{i,j,k=1}^N \|A_{jk} - \tilde A_{jk}\|\cdot  \| A_{ji} - \tilde A_{ji}\| \leq N \sum_{i,j=1}^N \|A_{ji} - \tilde A_{ji}\|^2.
\]
To this end,  we obtain 
\begin{align} \label{F-30}
\begin{aligned}
\frac12\frac\dd\dt \sum_{i,j=1}^N \|A_{ji} - \tilde A_{ji}\|^2 & \leq -\frac\kp N \sum_{i,j,k=1}^N (a_{ik} + a_{jk}) \|A_{ji} - \tilde A_{ji}\|^2  \\
& \hspace{0.5cm}  + 2\kp ( \xi_M^2 \sqrt p(\mathcal D(\mathcal S)  + \mathcal D(\tilde{\mathcal S})) + \xi_M \mathcal D(\xi))  \sum_{i,j=1}^N \|A_{ji} - \tilde A_{ji}\|^2 \\
& \hspace{0.5cm}  +\frac {\kappa N}2 \sum_{i=1}^N \| (B_{ci} - \tilde B_{ci})- (B_{ci} - \tilde B_{ci})^\top \|^2
\end{aligned}
\end{align}
\noindent $\bullet$ (Step B): Next, we are concerned with $\opnorm{(\mathcal A - \mathcal A^\top) - (\tilde{\mathcal A} - \tilde{\mathcal A}^\top)}_2^2$. Recall that 
\[
\begin{aligned}
\frac 12\|A_{ji} - \tilde A_{ji} - (A_{ji} - \tilde A_{ji})^\top \|^2 
&= \|A_{ji} - \tilde A_{ji} \|^2 -\frac{1}{2} \textup{tr} \left[(A_{ji} - \tilde A_{ji})^2+((A_{ji} - \tilde A_{ji})^2)^\top\right]\\
&=\|A_{ji} - \tilde A_{ji} \|^2-\textup{tr}\left[( A_{ji}-\tilde A_{ji})^2\right].
\end{aligned}
\]
By multiplying $A_{ji} - \tilde A_{ji}$, taking the trace, and closely following the estimate above, we find
\begin{align} \label{F-35}
\begin{aligned}
\frac12\frac\dd\dt \sum_{i,j=1}^N -\textup{tr} \left[( A_{ji}-\tilde A_{ji})^2\right]  & \leq  \frac\kp N \sum_{i,j,k=1}^N (a_{ik} + a_{jk}) \textup{tr} \left[( A_{ji} - \tilde A_{ji})^2\right]  + \mathcal D( \Xi) \sum_{i,j=1}^N \|A_{ji} - \tilde A_{ji}\|^2 \\
&  \hspace{0.5cm} + 2\kp \xi_M^2 \sqrt p ( \mathcal D( \mathcal S) + \mathcal D( \tilde{\mathcal S})) \sum_{i,j=1}^N \|A_{ji} - \tilde A_{ji}\|^2 \\
&  \hspace{0.5cm} - \frac {\kappa N}{2} \sum_{i=1}^N \|(B_{ci}- \tilde B_{ci}) - (B_{ci} - \tilde B_{ci})^\top \|^2 .
\end{aligned}
\end{align}
 Combining \eqref{F-30} and \eqref{F-35}, we derive 
\begin{align*}
&\frac14 \frac\dd\dt \sum_{i,j=1}^N \| (A_{ji} - \tilde A_{ji}) - (A_{ji} - \tilde A_{ji})^\top \|^2  \leq -\frac{\kp}{2N}  \sum_{i,j,k=1}^N (a_{ik} + a_{jk}) \| (A_{ji} - \tilde A_{ji}) - (A_{ji} - \tilde A_{ji})^\top \|^2 \\
& +(  4\kp \xi_M^2 \sqrt p (\mathcal D(\mathcal S) + \mathcal D(\tilde{\mathcal S})) + 2\kp\xi_M\mathcal D(\xi) + \mathcal D(\Xi)       )  \sum_{i,j=1}^N \|A_{ji} - \tilde A_{ji}\|^2 
\end{align*}
which gives
\begin{align} \label{F-40}
\begin{aligned}
&\frac14 \frac\dd\dt \sum_{i,j=1}^N \| (A_{ji} - \tilde A_{ji}) - (A_{ji} - \tilde A_{ji})^\top \|^2  \leq - \kp \xi_m \xi_c \sum_{i,j=1}^N  \| (A_{ji} - \tilde A_{ji}) - (A_{ji} - \tilde A_{ji})^\top \|^2 \\
& +(  4\kp \xi_M^2 \sqrt p (\mathcal D(\mathcal S) + \mathcal D(\tilde{\mathcal S})) + 2\kp \xi_M\mathcal D(\xi) + \mathcal D(\Xi)       )  \sum_{i,j=1}^N \|A_{ji} - \tilde A_{ji}\|^2 .
\end{aligned}
\end{align}
 Lastly, we recall
 \begin{align*}
\frac14\frac\dd\dt \sum_{i,j=1}^N \|A_{ji} - \tilde A_{ji}\|^2 & \leq -\frac{\kp}{2 N }\sum_{i,j,k=1}^N (a_{ik} + a_{jk}) \|A_{ji} - \tilde A_{ji}\|^2  \\
& +   (\kp \xi_M^2 \sqrt p(\mathcal D(\mathcal S)  + \mathcal D(\tilde{\mathcal S})) + \kp \xi_M \mathcal D(\xi))  \sum_{i,j=1}^N \|A_{ji} - \tilde A_{ji}\|^2 \\
& + \frac {\kappa \xi_M^2}{4} \sum_{i,j=1}^N \| (A_{ji} - \tilde A_{ji} ) - (A_{ji} - \tilde A_{ji})^\top\|^2,
\end{align*}
which also gives
 \begin{align} \label{F-45}
 \begin{aligned}
\frac14\frac\dd\dt \sum_{i,j=1}^N \|A_{ji} - \tilde A_{ji}\|^2 & \leq -\kp\xi_m \xi_c \sum_{i,j=1}^N  \|A_{ji} - \tilde A_{ji}\|^2  \\
& +   (\kp \xi_M^2 \sqrt p(\mathcal D(\mathcal S)  + \mathcal D(\tilde{\mathcal S})) + \xi_M \mathcal D(\xi))  \sum_{i,j=1}^N \|A_{ji} - \tilde A_{ji}\|^2 \\
& + \frac {\kappa \xi_M^2}{4} \sum_{i,j=1}^N \| (A_{ji} - \tilde A_{ji} ) - (A_{ji} - \tilde A_{ji})^\top\|^2.
\end{aligned}
\end{align}
$\bullet$ (Step C): Finally, we add \eqref{F-40} and \eqref{F-45} to obtain
\[
\frac\dd\dt \mathcal D(\mathcal A) \leq  -4 (\kp \xi_m\xi_c - \veps) \opnorm{\mathcal A- \tilde{\mathcal A}}_2^2 -\kp ( 4\xi_m \xi_c - \xi_M^2) \opnorm{(\mathcal A - \mathcal A^\top) - (\tilde{\mathcal A} - \tilde{\mathcal A}^\top)   }_2^2.
\]

\subsection{Proof of Lemma \ref{L3.2}}
By using \cite[Lemma 3.3]{HKK21}, we have
\begin{align} \label{C-0-3}
\begin{aligned}
&\frac{\dd}{\dd t} \|S_i - S_j\|^2 \\
&\hspace{0.5cm} \leq -\frac{\kp}{N} \sum_{k=1}^N a_{ik} ( \|S_i - S_j\|^2 - \|S_j - S_k\|^2) - \frac\kp N \sum_{k=1}^N a_{jk}( \|S_i- S_j\|^2 - \|S_i - S_k\|^2) \\
& \hspace{0.96cm}-( 2- \|S_ i- S_j\|^2) \cdot \frac{\kp}{2N } \sum_{k=1}^N( a_{ik} \|S_ i- S_k\|^2 + a_{jk} \|S_j - S_k\|^2)\\
& \hspace{0.96cm} + \textup{tr}(( A_{ji} - A_{ij})(\Xi_i -\Xi_j))\\
&\hspace{0.5cm}\leq  -\frac{\kp}{N} \sum_{k=1}^N \xi_m^2 ( \|S_i - S_j\|^2 - \|S_j - S_k\|^2+\|S_i-S_k\|^2)\\
&\hspace{0.96cm} - \frac\kp N \sum_{k=1}^N \xi_m^2( \|S_i- S_j\|^2 - \|S_i - S_k\|^2+ \|S_j - S_k\|^2)\\
&\hspace{0.96cm} +\frac{\kappa}{N}\sum_{k=1}^N\xi_m^2\|S_i-S_j\|^2( \|S_ i- S_k\|^2 +  \|S_j - S_k\|^2)\\
& \hspace{0.96cm} + \textup{tr}(( A_{ji} - A_{ij})(\Xi_i -\Xi_j)).
\end{aligned}
\end{align}
Then, Lemma \ref{L3.2} follows from the estimate
\[
|\textup{tr}(( A_{ji} - A_{ij})(\Xi_i -\Xi_j))| \leq 2\sqrt p \mathcal D(\Xi).
\]

\subsection{Proof of Lemma \ref{L3.3}}

Next, we provide the proof of Lemma \ref{L3.3}. We write
\[
\frac\dd\dt \mathcal D(\mathcal S) \leq \frac{\kp\xi_m^2}{4} f(\mathcal D(\mathcal S)), \quad f(r) := r^3 - 2r + \frac{8\sqrt p \mathcal D(\Xi)}{\kp \xi_m^2},\quad r\geq 0.
\]
By straightforward calculation, $f$ attains positive values at $x=0$ and $x=\sqrt{2}$.  Thus, if we verify 
\begin{equation}\label{A.10}
	f\left(\frac{\xi_m\xi_c-3\xi_MD(\xi)-\frac{D(\Xi)}{\kappa}}{10 \xi_M^2 \sqrt p}\right) <0,
\end{equation}
there are two positive roots $r_1,r_2$ of $f$ satisfying 
\[0<r_1<\frac{\xi_m\xi_c-3\xi_MD(\xi)-\frac{D(\Xi)}{\kappa}}{10 \xi_M^2 \sqrt p}<r_2<\sqrt{2}.\]
Therefore, whenever $D(\mathcal S)$ is contained in $(r_1,\frac{\xi_m\xi_c-3\xi_MD(\xi)-\frac{D(\Xi)}{\kappa}}{10 \xi_M^2 \sqrt p})$, its derivative $\frac{dD(\mathcal S)}{dt}$ must be strictly negative, and the set $\{ t>0:\mathcal D(\mathcal S) < \frac{\xi_m\xi_c-3\xi_MD(\xi)-\frac{D(\Xi)}{\kappa}}{10 \xi_M^2 \sqrt p}\}$ becomes positively invariant.\\

To show the \eqref{A.10}, we first note that the following inequality holds:

\[0<\frac{\xi_m\xi_c-3\xi_MD(\xi)-\frac{D(\Xi)}{\kappa}}{10 \xi_M^2 \sqrt p}\leq \frac{1}{10\sqrt{p}}. \]
Therefore, we have 
\[\begin{aligned}
	&f\left(\frac{\xi_m\xi_c-3\xi_MD(\xi)-\frac{D(\Xi)}{\kappa}}{10 \xi_M^2 \sqrt p}\right)\\
	&\hspace{0.5cm}\leq \left(\frac{\xi_m\xi_c-3\xi_MD(\xi)-\frac{D(\Xi)}{\kappa}}{10 \xi_M^2 \sqrt p}\right)(\frac{1}{100p}-2)+\frac{8\sqrt p \mathcal D(\Xi)}{\kp \xi_m^2}\\
	&\hspace{0.5cm}=\left(\frac{\xi_m\xi_c-3\xi_MD(\xi)}{10 \xi_M^2 \sqrt p}\right)(\frac{1}{100p}-2)+\frac{D(\Xi)}{\kappa}\left(\frac{2-\frac{1}{100p}}{10\xi_M^2\sqrt{p}}+\frac{8\sqrt{p}}{\xi_m^2} \right)\\
	&\hspace{0.5cm}=\frac{1}{10\xi_M^2\sqrt{p}}\left(-(2-\frac{1}{100p})(\xi_m\xi_c-3\xi_MD(\xi))+\frac{D(\Xi)}{\kappa}(2-\frac{1}{100p}+\frac{80\xi_M^2p}{\xi_m^2} ) \right)\\
	&\hspace{0.5cm}<0,
\end{aligned} \]
where we used $(\mathcal{F}3)$ in the last inequality.

\subsection{Proof of Lemma \ref{L4.1}}
The goal of this subsection is to derive the estimate of $\|S_i - \tilde S_i\|^2$ when $\{S_i\}$ and $\{\tilde S_i\}$ are solutions to \eqref{main}. For this,  we consider the difference $S_i-\tilde S_i$ and its transpose $S_i^\top - \tilde S_i^\top$:
\begin{align*}
\frac\dd\dt (S_i- \tilde S_i) &= (S_i - \tilde S_i)\Xi_i \\
& + \frac\kp N \sum_{k=1}^N a_{ik} \left[ (S_k - \tilde S_k) - \frac12 ( S_i S_i^\top S_k - \tilde S_i \tilde S_i^\top \tilde S_k ) - \frac12 ( S_iS_k^\top S_i - \tilde S_i \tilde S_k^\top \tilde S_i) \right]
\end{align*} 
and
\begin{align*}
\frac\dd\dt (S_i^\top - \tilde S_i^\top ) &= - \Xi_i(S_i^\top -\tilde S_i^\top) \\
&+  \frac\kp N \sum_{k=1}^N a_{ik} \left[  (S_k^\top  - \tilde S_k^\top ) - \frac12 ( S_k^\top S_i S_i^\top  -  \tilde S_k^\top \tilde S_i \tilde S_i^\top   ) - \frac12 ( S_i^\top S_k S_i^\top - \tilde S_i^\top \tilde S_k  \tilde S_i^\top ) \right].
\end{align*} 
Then, we observe 
\begin{align} \label{C-15}
\begin{aligned}
\frac12\frac\dd\dt \|S_i - \tilde S_i\|^2  &=  \frac{1}{2}\frac\dd\dt \tr ( (S_i^\top  - \tilde S_i^\top) ( S_i - \tilde S_i) )  \\
& = \frac{1}{2}\tr( ( S_i^\top - \tilde S_i^\top)(\dot S_i - \dot{\tilde S}_i))  +  \frac{1}{2} \tr (( \dot S_i^\top - \dot{ \tilde S}_i^\top) (S_i - \tilde S_i)) \\
& =: \mathcal I_1 + \mathcal I_2.
\end{aligned}
\end{align}
Note that the term containing $\Xi_i$ vanishes due to the skew-symmetricity of $\Xi_i$. 
Below, we calculate $\mathcal I_1$ and $\mathcal I_2$ in  respectively. \newline

\noindent $\bullet$ (Calculation of $\mathcal I_1$):  we first observe
\begin{align} \label{C-18}
\begin{aligned}
2\mathcal I_1 & = \tr( ( S_i^\top - \tilde S_i^\top)(\dot S_i - \dot{\tilde S}_i)) \\
& = \frac\kp N \sum_{k=1}^N a_{ik} \tr( (S_i^\top - \tilde S_i^\top ) (S_k - \tilde S_k) )\\
&\hspace{0.5cm} -\frac{\kp}{2N} \sum_{k=1}^N a_{ik} \tr( ( S_i^\top - \tilde S_i^\top) ( S_i S_i^\top S_k - \tilde S_i \tilde S_i^\top \tilde S_k) ) \\
&\hspace{0.5cm} -\frac{\kp}{2N} \sum_{k=1}^N a_{ik} \tr( ( S_i^\top - \tilde S_i^\top) (S_iS_k^\top S_i - \tilde S_i \tilde S_k^\top \tilde S_i)) \\
& =: \mathcal I_{11} + \mathcal I_{12} + \mathcal I_{13}.
\end{aligned}
\end{align}
For further calculation, we split $\mathcal I_1$ into three terms $\mathcal I_{1k},~k=1,2,3$. \newline 

$\diamond$ (Estimate of $\mathcal I_{11}$): we find
\begin{equation} \label{C-19}
\mathcal I_{11} = \frac\kp N \sum_{k=1}^N a_{ik} \tr( (S_i^\top - \tilde S_i^\top ) (S_k - \tilde S_k) )  \leq \frac\kp N \sum_{k=1}^N |a_{ik}| \|S_i - \tilde S_i\| \|S_k - \tilde S_k \|.
\end{equation}  

$\diamond$ (Calculation of $\mathcal I_{12}$): we observe 
\begin{align} \label{C-20}
\begin{aligned}
S_i S_i^\top S_k - \tilde S_i \tilde S_i^\top \tilde S_k & = S_i (S_i^\top S_k - \tilde S_i^\top \tilde S_k ) + (S_i - \tilde S_i) \tilde S_i^\top \tilde S_k \\
&= S_i (S_i^\top S_k - \tilde S_i^\top \tilde S_k ) + ( S_i - \tilde S_i) + (S_i - \tilde S_i)(\tilde S_i^\top \tilde S_k -I_p).
\end{aligned}
\end{align}
Then, $\mathcal I_{12}$ becomes
\begin{align*}
\mathcal I_{12} & =  -\frac{\kp}{2N} \sum_{k=1}^N a_{ik} \tr( ( S_i^\top - \tilde S_i^\top) ( S_i S_i^\top S_k - \tilde S_i \tilde S_i^\top \tilde S_k) ) \\
& = -\frac{\kp}{2N} \sum_{k=1}^N a_{ik} \tr( ( S_i ^\top - \tilde S_i^\top) S_i ( S_i^\top S_k - \tilde S_i^\top \tilde S_k)) - \frac{\kp}{2N} \sum_{k=1}^N a_{ik} \tr(( S_i^\top - \tilde S_i^\top) (S_i - \tilde S_i)) \\
 & \hspace{0.5cm}-\frac{\kp}{2N} \sum_{k=1}^N a_{ik} \tr(( (S_i^\top -\tilde S_i^\top)(S_i - \tilde S_i) (\tilde S_i^\top \tilde S_k - I_p))) \\
 & =: \mathcal I_{121} + \mathcal I_{122} + \mathcal I_{123} 
\end{align*} 
Again, we separate $\mathcal I_{12}$ into three terms $\mathcal I_{12k},~k=1,2,3$. \newline  

$\circ$ (Calculation of $\mathcal I_{122}$): by definition of the Frobenius norm,
\[
\mathcal I_{122} = - \frac{\kp}{2N} \sum_{k=1}^N a_{ik} \tr(( S_i^\top - \tilde S_i^\top) (S_i - \tilde S_i)) = -\frac{\kp}{2N} \sum_{k=1}^N a_{ik} \|S_i - \tilde S_i\|^2 .
\]

$\circ$ (Estimate of $\mathcal I_{123}$): we find
\begin{align*}
\mathcal I_{123}  & = -\frac{\kp}{2N} \sum_{k=1}^N a_{ik} \tr(( (S_i^\top -\tilde S_i^\top)(S_i - \tilde S_i) (\tilde S_i^\top \tilde S_k - I_p))) \\
& \leq \frac{\kp}{2N}\sum_{k=1}^N |a_{ik}| \mathcal Z(t) \|S_i - \tilde S_i\|^2.
\end{align*}
Thus, $\mathcal I_{12}$ satisfies 
\begin{equation} \label{C-25}
\mathcal I_{12}  \leq \mathcal I_{121} -\frac{\kp}{2N} \sum_{k=1}^N a_{ik} \|S_i - \tilde S_i\|^2  + \frac{\kp \mathcal Z(t) }{2N}\sum_{k=1}^N |a_{ik}|  \|S_i - \tilde S_i\|^2.
\end{equation}

$\diamond$ (Estimate of $\mathcal I_{13}$): similarly for \eqref{C-20} in the estimate of $\mathcal I_{12}$, we use 
\begin{align*}
S_iS_k^\top S_i - \tilde S_i \tilde S_k^\top \tilde S_i & = S_i ( S_k^\top S_i - \tilde S_k^\top \tilde S_i) + (S_i  - \tilde S_i) \tilde S_k^\top \tilde S_i  \\
 & = S_i ( S_k^\top S_i - \tilde S_k^\top \tilde S_i) + (S_i  - \tilde S_i) + (S_i - \tilde S_i) (\tilde S_k^\top \tilde S_i - I_p)
\end{align*}
to find 
\begin{align*}
\mathcal I_{13} &  =  -\frac{\kp}{2N} \sum_{k=1}^N a_{ik} \tr( ( S_i^\top - \tilde S_i^\top) (S_iS_k^\top S_i - \tilde S_i \tilde S_k^\top \tilde S_i)) \\
& = -\frac{\kp}{2N} \sum_{k=1}^N a_{ik} \tr(( S_i^\top - \tilde S_i^\top)S_i(S_k^\top S_i - \tilde S_k^\top \tilde S_i)) -\frac{\kp}{2N} \sum_{k=1}^N a_{ik} \tr(( S_i^\top -\tilde S_i^\top)(S_i- \tilde S_i)) \\
& \hspace{0.5cm}- \frac{\kp}{2N} \sum_{k=1}^N a_{ik} \tr(( S_i^\top - \tilde S_i^\top)(S_i-\tilde S_i) ( \tilde S_k^\top \tilde S_i - I_p)) \\
& =: \mathcal I_{131} + \mathcal I_{132} + \mathcal I_{133} .
\end{align*}
Since $\mathcal I_{12}$ and $\mathcal I_{13}$ have a similar structure, if we closely follow \eqref{C-25}, then we have 
\begin{equation} \label{C-28}
\mathcal I_{13} \leq \mathcal I_{131} - \frac{\kp}{2N} \sum_{k=1}^N a_{ik} \|S_i - \tilde S_i\|^2 + \frac{\kp \mathcal Z(t) }{2N} \sum_{k=1}^N |a_{ik} |\|S_i - \tilde S_i\|^2.
\end{equation} 
For the estimate of $\mathcal I_1$ in \eqref{C-18}, we collect the estimates for $\mathcal I_{11}$ in \eqref{C-19}, $\mathcal I_{12}$ in \eqref{C-25} and $\mathcal I_{13}$ in \eqref{C-28} to obtain 
\begin{align} \label{C-30}
\begin{aligned}
 2\mathcal I_1 & \leq \frac\kp N \sum_{k=1}^N |a_{ik}| \|S_i - \tilde S_i\| \|S_k - \tilde S_k \| \\
& \hspace{0.5cm}+  \mathcal I_{121} -\frac{\kp}{2N} \sum_{k=1}^N a_{ik} \|S_i - \tilde S_i\|^2  + \frac{\kp \mathcal Z(t) }{2N}\sum_{k=1}^N |a_{ik}|  \|S_i - \tilde S_i\|^2\\
& \hspace{0.5cm}+ \mathcal I_{131} - \frac{\kp}{2N} \sum_{k=1}^N a_{ik} \|S_i - \tilde S_i\|^2 + \frac{\kp \mathcal Z(t) }{2N} \sum_{k=1}^N |a_{ik} |\|S_i - \tilde S_i\|^2. 
\end{aligned}
\end{align}

Next, we turn to the estimate of $\mathcal I_2$ which is similar to the case of $\mathcal I_1$.  Thus, we omit several details. 

\noindent $\bullet$ (Calculation of $\mathcal I_2$): we observe
\begin{align*}
 2\mathcal I_2 & = \tr (( \dot S_i^\top - \dot{ \tilde S}_i^\top) (S_i - \tilde S_i)) \\
& = \frac{\kp}{N} \sum_{k=1}^N a_{ik} \tr(( S_k^\top - \tilde S_k^\top)(S_i - \tilde S_i)) \\
& \hspace{0.5cm} -\frac{\kp}{2N} \sum_{k=1}^N a_{ik} \tr(( S_k^\top S_i S_i^\top - \tilde S_k^\top \tilde S_i \tilde S_i^\top)(S_i - \tilde S_i)) \\
& \hspace{0.5cm} -\frac{\kp}{2N} \sum_{k=1}^N a_{ik} \tr(( S_i^\top S_k S_i^\top - \tilde S_i^\top \tilde S_k \tilde S_i^\top)(S_i - \tilde S_i))  \\
 & =: \mathcal I_{21}  + \mathcal I_{22} + \mathcal I_{23} .
\end{align*}

$\diamond$ (Estimate of $\mathcal I_{21}$): we find
\[
\mathcal I_{21} \leq \frac\kp N \sum_{k=1}^N |a_{ik}| \|S_i - \tilde S_i \|  \|S_k - \tilde S_k\| .
\]

$\diamond$ (Estimate of $\mathcal I_{22}$): we find
\begin{align*}
& S_k^\top S_i S_i^\top  -  \tilde S_k^\top \tilde S_i \tilde S_i^\top = ( S_k^\top S_i - \tilde S_k^\top \tilde S_i) \tilde S_i^\top + S_k^\top S_i ( S_i^\top - \tilde S_i^\top)    \\
& = ( S_k^\top S_i - \tilde S_k^\top \tilde S_i) \tilde S_i^\top + (S_i^\top - \tilde S_i^\top ) + (S_k^\top S_i - I_p)(S_i ^\top - \tilde S_i^\top),
\end{align*} 
and this gives
\begin{align*}
\mathcal I_{22} & = -\frac{\kp}{2N} \sum_{k=1}^N a_{ik} \tr(( S_k^\top S_i S_i^\top - \tilde S_k^\top \tilde S_i \tilde S_i^\top)(S_i - \tilde S_i)) \\
& = -\frac{\kp}{2N} \sum_{k=1}^N a_{ik} \tr( ( S_k^\top S_i - \tilde S_k^\top \tilde S_i) \tilde S_i^\top ( S_i -\tilde S_i)) -\frac{\kp}{2N} \sum_{k=1}^N a_{ik} \tr(( S_i^\top -\tilde S_i^\top)(S_i - \tilde S_i)) \\
&\hspace{0.5cm}  - \frac{\kp}{2N} \sum_{k=1}^N a_{ik} \tr(( S_k^\top S_i - I_p)(S_i^\top - \tilde S_i^\top)(S_i-\tilde S_i))  \\
& =: \mathcal I_{221} + \mathcal I_{222} + \mathcal I_{223} .
\end{align*}

$\circ$ (Estimates of $\mathcal I_{222}$ and $\mathcal I_{233}$) : 
\begin{align*}
\mathcal I_{222} & = -\frac{\kp}{2N} \sum_{k=1}^N a_{ik} \tr(( S_i^\top -\tilde S_i^\top)(S_i - \tilde S_i)) = -\frac{\kp}{2N} \sum_{k=1}^N a_{ik} \|S_i  - \tilde S_i\|^2 , \\
\mathcal I_{223} & =  -\frac{\kp}{2N} \sum_{k=1}^N a_{ik} \tr(( S_k^\top S_i - I_p)(S_i^\top - \tilde S_i^\top)(S_i-\tilde S_i))  \leq \frac{\kp \mathcal Z(t)}{2N} \sum_{k=1}^N |a_{ik} | \|S_i - \tilde S_i\|^2 .
\end{align*}
Thus, $\mathcal I_{22}$ satisfies
\begin{align*}
\mathcal I_{22} \leq \mathcal I_{221}   -\frac{\kp}{2N} \sum_{k=1}^N a_{ik} \|S_i  - \tilde S_i\|^2 + \frac{\kp \mathcal Z(t)}{2N} \sum_{k=1}^N |a_{ik} | \|S_i - \tilde S_i\|^2.
\end{align*}
Similarly for $\mathcal I_{23}$, we use 
\begin{align*}
&S_i^\top S_k S_i^\top - \tilde S_i^\top \tilde S_k  \tilde S_i^\top =( S_i^\top S_k - \tilde S_i^\top \tilde S_k) \tilde S_i^\top  +  S_i^\top S_k ( S_i^\top  - \tilde S_i^\top)   \\
& = ( S_i^\top S_k - \tilde S_i^\top \tilde S_k) \tilde S_i^\top + (S_i^\top - \tilde S_i^\top) + (S_i^\top S_k - I_p)(S_i^\top - \tilde S_i^\top) 
\end{align*}
to find
\begin{align*}
\mathcal I_{23} &=  -\frac{\kp}{2N} \sum_{k=1}^N a_{ik} \tr(( S_i^\top S_k S_i^\top - \tilde S_i^\top \tilde S_k \tilde S_i^\top)(S_i - \tilde S_i)) \\
& \hspace{0.5cm} -\frac{\kp}{2N} \sum_{k=1}^N a_{ik} \tr(( S_i^\top S_k - \tilde S_i^\top \tilde S_k)\tilde S_i^\top ( S_i - \tilde S_i))  -\frac{\kp}{2N} \sum_{k=1}^N a_{ik} \tr(( S_i^\top - \tilde S_i^\top )(S_i - \tilde S_i)) \\
& \hspace{0.5cm}-\frac{\kp}{2N} \sum_{k=1}^N a_{ik} \tr(( S_i^\top S_k - I_p) (S_i^\top - \tilde S_i^\top )(S_i - \tilde S_i))  \\
& =: \mathcal I_{231} + \mathcal I_{232} + \mathcal I_{233} .
\end{align*}
Hence, $\mathcal I_{23}$ satisfies
\[
\mathcal I_{23} \leq \mathcal I_{231}   -\frac{\kp}{2N} \sum_{k=1}^N a_{ik} \|S_i  - \tilde S_i\|^2 + \frac{\kp \mathcal Z(t)}{2N} \sum_{k=1}^N |a_{ik} | \|S_i - \tilde S_i\|^2.
\] 
Now, we obtain the estimate for $\mathcal I_2$:
\begin{align}  \label{C-40}
\begin{aligned}
 2\mathcal I_2 & \leq \frac\kp N \sum_{k=1}^N |a_{ik}| \|S_i - \tilde S_i \| \|S_k - \tilde S_k\| \\
& + \mathcal I_{221}   -\frac{\kp}{2N} \sum_{k=1}^N a_{ik} \|S_i  - \tilde S_i\|^2 + \frac{\kp \mathcal Z(t)}{2N} \sum_{k=1}^N |a_{ik} | \|S_i - \tilde S_i\|^2 \\
& +\mathcal I_{231}   -\frac{\kp}{2N} \sum_{k=1}^N a_{ik} \|S_i  - \tilde S_i\|^2 + \frac{\kp \mathcal Z(t)}{2N} \sum_{k=1}^N |a_{ik} | \|S_i - \tilde S_i\|^2.
\end{aligned} 
\end{align} 
So far, we have obtained the estimates of $\mathcal I_1$ in \eqref{C-30}   and $\mathcal I_2$ in \eqref{C-40}. To this end, we return to \eqref{C-15}: 
\begin{align} \label{C-42}
\begin{aligned}
\frac12\frac\dd\dt \|S_i - \tilde S_i\|^2 = \mathcal I_1 + \mathcal I_2  &\leq  \frac{1}{2}(\mathcal I_{121} + \mathcal I_{131} + \mathcal I_{221} + \mathcal I_{231} ) \\
&\hspace{0.5cm}+ \frac{\kp}{N} \sum_{k=1}^N |a_{ik}| \|S_i-\tilde S_i\| \|S_k - \tilde S_k\|  -\frac{\kp}{N} \sum_{k=1}^N a_{ik} \|S_i - \tilde S_i\|^2 \\
&\hspace{0.5cm}+ \frac{\kp \mathcal Z(t)}{N} \sum_{k=1}^N |a_{ik}|\|S_i - \tilde S_i\|^2.
\end{aligned}
\end{align} 
For the term $\mathcal I_{121} + \mathcal I_{131} + \mathcal I_{221} + \mathcal I_{231}$, we observe 
\begin{align*}
 &\mathcal I_{121} + \mathcal I_{131} + \mathcal I_{221} + \mathcal I_{231}  \\
&\hspace{0.5cm} = -\frac{\kp}{2N} \sum_{k=1}^N a_{ik} \tr( ( S_i ^\top - \tilde S_i^\top) S_i ( S_i^\top S_k - \tilde S_i^\top \tilde S_k)) -\frac{\kp}{2N} \sum_{k=1}^N a_{ik} \tr(( S_i^\top - \tilde S_i^\top)S_i(S_k^\top S_i - \tilde S_k^\top \tilde S_i))  \\
&\hspace{1cm}  -\frac{\kp}{2N} \sum_{k=1}^N a_{ik} \tr( ( S_k^\top S_i - \tilde S_k^\top \tilde S_i) \tilde S_i^\top ( S_i -\tilde S_i))  -\frac{\kp}{2N} \sum_{k=1}^N a_{ik} \tr(( S_i^\top S_k - \tilde S_i^\top \tilde S_k)\tilde S_i^\top ( S_i - \tilde S_i))  \\
& \hspace{0.5cm}= -\frac{\kp}{2N} \sum_{k=1}^N a_{ik} \tr(( S_i^\top S_k - \tilde S_i^\top \tilde S_k) (( S_i^\top - \tilde S_i^\top) S_i  + \tilde S_i^\top ( S_i - \tilde S_i))) \\
& \hspace{1cm}- \frac{\kp}{2N} \sum_{k=1}^N a_{ik} \tr(( S_k^\top S_i - \tilde S_k^\top \tilde S_i) ( ( S_i^\top - \tilde S_i^\top ) S_i  + \tilde S_i^\top ( S_i - \tilde S_i))) =0. 
\end{align*} 
This completes the proof.

\end{document}